\documentclass[11pt,letter]{article}
\usepackage{amsfonts,epsf,amsmath,amssymb,graphicx,tabularx,booktabs,url}

\newtheorem{theorem}{\bf Theorem}[section]
\newtheorem{corollary}[theorem]{\bf Corollary}
\newtheorem{lemma}[theorem]{\bf Lemma}

\newtheorem{conjecture}[theorem]{\bf Conjecture}
\newtheorem{remark}[theorem]{\bf Remark}

\newtheorem{definition}[theorem]{\bf Definition}

\newcommand{\proof}{\noindent{\bf Proof.\ }}
\newcommand{\qed}{\hfill $\blacksquare$ \bigskip}



\textwidth 15cm \textheight 23cm \oddsidemargin 0.5cm \evensidemargin 0.5cm \voffset -2cm

\begin{document}

\title{ \bf On the extremal properties \\ of the average eccentricity}

\author{
Aleksandar Ili\' c \footnotemark[3] \\
Faculty of Sciences and Mathematics, Vi\v segradska 33, 18 000 Ni\v s \\
University of Ni\v s, Serbia \\
e-mail: \tt{aleksandari@gmail.com} \\
}

\date{\today}

\maketitle

\begin{abstract}
The eccentricity of a vertex is the maximum distance from it to another vertex and the average
eccentricity $ecc (G)$ of a graph $G$ is the mean value of eccentricities of all vertices of~$G$.
The average eccentricity is deeply connected with a topological descriptor called the eccentric
connectivity index, defined as a sum of products of vertex degrees and eccentricities. In this
paper we analyze extremal properties of the average eccentricity, introducing two graph
transformations that increase or decrease $ecc (G)$. Furthermore, we resolve four conjectures,
obtained by the system AutoGraphiX, about the average eccentricity and other graph parameters (the
clique number, the Randi\' c index and the independence number), refute one AutoGraphiX conjecture
about the average eccentricity and the minimum vertex degree and correct one AutoGraphiX conjecture
about the domination number.
\end{abstract}

{\bf Key words}: distances; average eccentricity; vertex degree; Randi\' c index; AutoGraphiX;
extremal graph. \vskip 0.1cm

{{\bf AMS Classifications:} 05C12, 05C35, 92E10.} \vskip 0.1cm

\section{Introduction}

Let $G = (V, E)$ be a connected simple graph with $n = |V|$ vertices and $m = |E|$ edges. Let $deg
(v)$ denotes the degree of the vertex $v$. Let $\delta = \delta (G)$ be the minimum vertex degree,
and $\Delta = \Delta (G)$ be the maximum vertex degree of a graph $G$.

For vertices $u, v \in V$, the distance $d (u, v)$ is defined as the length of the shortest path
between $u$ and $v$ in $G$. The eccentricity of a vertex is the maximum distance from it to any
other vertex,
$$
\varepsilon (v) = \max_{u \in V} d (u, v).
$$

The radius of a graph $r (G)$ is the minimum eccentricity of any vertex. The diameter of a graph $d
(G)$ is the maximum eccentricity of any vertex in the graph, or the greatest distance between any
pair of vertices. For an arbitrary vertex $v \in V$ it holds that $r (G) \leq \varepsilon (v) \leq
d (G)$. A vertex $c$ of $G$ is called central if $\varepsilon (c) = r (G)$. The center $C(G)$ is
the set of all central vertices in $G$. An eccentric vertex of a vertex $v$ is a vertex farthest
away from~$v$. Every tree has exactly one or two center vertices \cite{BuHa90}.

The average eccentricity of a graph $G$ is the mean value of eccentricities of vertices of~$G$,
$$
ecc (G) = \frac{1}{n} \sum_{v \in V} \varepsilon (v).
$$

For example, we have the following formulas for the average eccentricity of the complete graph
$K_n$, complete bipartite graph $K_{n, m}$, hypercube $H_n$, path $P_n$, cycle $C_n$ and star
$S_n$,
$$
ecc (K_n) = 1 \qquad ecc (K_{n, m}) = 2 \qquad ecc (Q_n) = n
$$
$$
ecc (P_n) = \frac{1}{n} \left \lfloor \frac{3}{4}n^2 - \frac{1}{2}n \right \rfloor \qquad ecc (C_n)
= \left \lfloor \frac{n}{2} \right \rfloor \qquad ecc (S_n) = 2 - \frac{1}{n}.
$$

Dankelmann, Goddard and Swart \cite{DaGoSw04} presented some upper bounds and formulas for the
average eccentricity regarding the diameter and the minimum vertex degree. Furthermore, they
examine the change in the average eccentricity when a graph is replaced by a spanning subgraph, in
particular the two extreme cases: taking a spanning tree and removing one edge. Dankelmann and
Entringer \cite{DaEn00} studied the average distance of $G$ within various classes of graphs.

In theoretical chemistry molecular structure descriptors (also called topological indices) are used
for modeling physico-chemical, pharmacologic, toxicologic, biological and other properties of
chemical compounds \cite{GuPo86}. There exist several types of such indices, especially those based
on vertex and edge distances \cite{IlKlMi10,KhYoAsWa09}. Arguably the best known of these indices
is the Wiener index $W$, defined as the sum of distances between all pairs of vertices of the
molecular graph~\cite{DoEnGu01}
$$
W (G) = \sum_{u, v \in V (G)} d (u, v).
$$
Besides of use in chemistry, it was independently studied due to its relevance in social science,
architecture, and graph theory.

Sharma, Goswami and Madan \cite{ShGoMa97} introduced a distance--based molecular structure
descriptor, the eccentric connectivity index, which is defined as
$$
\xi^c = \xi^c (G) = \sum_{v \in V (G)} deg (v) \cdot \varepsilon (v).
$$

The eccentric connectivity index is deeply connected to the average eccentricity, but for each
vertex $v$, $\xi^c (G)$ takes one local property (vertex degree) and one global property (vertex
eccentricity) into account. For $k$-regular graph $G$, we have $\xi^c (G) = k \cdot n \cdot ecc
(G)$.

The index $\xi^c$ was successfully used for mathematical models of biological activities of diverse
nature. The eccentric connectivity index has been shown to give a high degree of predictability of
pharmaceutical properties, and provide leads for the development of safe and potent anti-HIV
compounds. The investigation of its mathematical properties started only recently, and has so far
resulted in determining the extremal values and the extremal graphs \cite{IlGu10,ZhDu09}, and also
in a number of explicit formulae for the eccentric connectivity index of several classes of graphs
\cite{DoSa09} (for a recent survey see \cite{Il10}).

AutoGraphiX (AGX) computer system was developed by GERAD group from Montr\' eal
\cite{Ao06,AGX,CaHa00}. AGX is an interactive software designed to help finding conjectures in
graph theory. It uses the Variable Neighborhood Search metaheuristic (Hansen and Mladenovi\' c
\cite{HaMl01,HaMlMo10}) and data analysis methods to find extremal graphs with respect to one or
more invariants. Recently there is vast research regarding AGX conjectures and series of papers on
various graph invariants: average distance \cite{AuHa07}, independence number \cite{AoBrHa08},
proximity and remoteness \cite{AuHa10}, largest eigenvalue of adjacency and Laplacian matrix
\cite{AuHa10a}, connectivity index \cite{CaGuHaPa03}, Randi\' c index \cite{HaVu09}, connectivity
and distance measures \cite{SeVuAoHa07}, etc. In this paper we continue this work and resolve other
conjectures from the thesis \cite{Ao06}, available online at \url{http://www.gerad.ca/~agx/}.

Recall that the vertex connectivity $\nu$ of $G$ is the smallest number of vertices whose removal
disconnects $G$ and the edge connectivity $\kappa$ of $G$ is the smallest number of edges whose
removal disconnects $G$. Sedlar, Vuki\v cevi\' c and Hansen \cite{SeVuHa07} studied the lower and
upper bounds of $ecc - \delta$, $ecc + \delta$ and $ecc / \delta$, the lower bound for $ecc \cdot
\delta$, and similar relations by replacing $\delta$ with $\nu$ and $\kappa$.

The paper is organized as follows. In Section 2 we introduce a simple graph transformation that
increases the average eccentricity and characterize the extremal tree with maximum average
eccentricity among trees on $n$ vertices with given maximum vertex degree. In Section 3 we resolve
a conjecture about the upper bound of the sum $ecc + \alpha$, where $\alpha$ is the independence
number. In Section~4, we resolve two conjectures about the extremal values $ecc + Ra$ and $ecc
\cdot Ra$, where $Ra$ denotes the Randi\' c index of $G$. In Section 4, we characterize the
extremal graph having maximum value of average eccentricity in the class of $n$-vertex graphs with
given clique number $\omega$. In Section 5, we refute a conjecture about the maximum value of the
product $ecc \cdot \delta$. We close the paper in Section 6 by restating some other AGX conjecture
for the future research and correcting a conjecture about $ecc + \gamma$, where $\gamma$ denotes
the domination number.

\section{The average eccentricity of trees with given maximum degree}

\begin{theorem}
\label{thm-pi} Let $w$ be a vertex of a nontrivial connected graph $G$. For nonnegative integers
$p$ and $q$, let $G (p, q)$ denote the graph obtained from $G$ by attaching to vertex $w$ pendent
paths $P = w v_1 v_2 \ldots v_p$ and $Q = w u_1 u_2 \dots u_q$ of lengths $p$ and $q$,
respectively. If $p \geq q \geq 1$, then
$$
ecc (G (p, q)) < ecc (G (p + 1, q - 1)).
$$
\end{theorem}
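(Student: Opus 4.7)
The plan is to establish the inequality by directly comparing total eccentricities, since both $G(p,q)$ and $G(p+1,q-1)$ have the same vertex count $n = |V(G)| + p + q$. Set $d^* = \varepsilon_G(w)$, $H = G(p,q)$, and $H' = G(p+1,q-1)$. Along each pendent path, the farthest vertex from any given $y$ must be the leaf at the far end, so every eccentricity in $H$ can be written as an explicit maximum of at most three linear expressions in $i, j, p, q, d^*$. Specifically, for $x \in V(G)$, $\varepsilon_H(x) = \max(\varepsilon_G(x),\, d_G(x,w)+p)$; for the short-path vertex $u_j$, $\varepsilon_H(u_j) = j + \max(d^*, p)$; and for the long-path vertex $v_i$, $\varepsilon_H(v_i) = \max(i + \max(d^*, q),\, p-i)$. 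The corresponding formulas in $H'$ are obtained by substituting $p+1$ for $p$ and $q-1$ for $q$ throughout.

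Using the natural bijection that pairs $u_q \in V(H)$ with $v_{p+1} \in V(H')$ and is the identity on every other vertex, the difference $S_{H'} - S_H = \sum_v \varepsilon_{H'}(v) - \sum_v \varepsilon_H(v)$ splits as $A + B + C$, where $A$ collects the contributions from $V(G)$, $B$ those from the long path, and $C$ those from the short path. I would then resolve the inner maxima by a case analysis on where $d^*$ sits relative to $p$ and $q$: (i) $d^* \leq q - 1$; (ii) $q \leq d^* \leq p$; (iii) $d^* \geq p + 1$. In cases (i) and (ii), the triangle inequality $\varepsilon_G(x) \leq d_G(x,w) + d^*$ combined with $d^* \leq p$ forces $\varepsilon_{H'}(x) - \varepsilon_H(x) = 1$ for every $x \in V(G)$, so $A = |V(G)| \geq 2$ by the nontriviality assumption; in case (iii), one only needs $A \geq 0$.

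A routine calculation then gives $B + C = -1$ in case (i), $B + C = \lfloor (p-d^*)/2 \rfloor + d^* \geq 1$ in case (ii), and $B + C = p - q + 1 \geq 1$ in case (iii), so that $S_{H'} - S_H \geq 1$ in every case. The main technical point is the long-path sum in case (i), where the two arguments of $\max(i+q,\, p-i)$ cross over near $i = (p-q)/2$ and the parity of $p-q$ seemingly intervenes. The cancellation is that the per-index differences $\varepsilon_{H'}(v_i) - \varepsilon_H(v_i)$ telescope to $-q$ regardless of the parity, so $B = -q + \varepsilon_{H'}(v_{p+1}) = p$, which cancels the short-path deficit $C = -p-1$ up to the residual $-1$ absorbed by the positive contribution from $A$.
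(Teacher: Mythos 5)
Your proof is correct and follows essentially the same strategy as the paper's: a three-way case split on $d^*=\varepsilon_G(w)$ relative to $p$ and $q$, followed by a vertex-by-vertex comparison of eccentricities under the natural correspondence $u_q \leftrightarrow v_{p+1}$. Your explicit max-formulas and the resulting sums check out (they are in fact more precise than the paper's Case 3, whose stated bound $\geq |G|$ is really $|G|-1$ when $\varepsilon_G(w)<q$), and you invoke the nontriviality of $G$ at exactly the same point, to absorb the residual $-1$.
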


\proof Since after this transformation the longer path has increased and the eccentricities of
vertices from $G$ are either the same or increased by one. We will consider three simple cases
based on the longest path from the vertex $w$ in the graph $G$. Denote with $\varepsilon' (v)$ the
eccentricity of vertex $v$ in $G (p + 1, q - 1)$.
\medskip

\noindent {\it Case 1. } The length of the longest path from the vertex $w$ in $G$ is greater than
$p$. This means that the vertex of $G$, most distant from $w$ is the most distant vertex for all
vertices of $P$ and $Q$. It follows that $\varepsilon' (v) = \varepsilon (v)$ for all vertices $w,
v_1, v_2, \ldots, v_p, u_1, u_2, \ldots, u_{q - 1}$, while the eccentricity of $u_q$ increased by
$p + 1 - q$. Therefore,
$$
ecc (G (p + 1, q - 1)) - ecc (G (p, q)) = p + 1 - q > 0.
$$

\noindent {\it Case 2. } The length of the longest path from the vertex $w$ in $G$ is less than or
equal to $p$ and greater than $q$. This means that either the vertex of $G$ that is most distant
from $w$ or the vertex $v_p$ is the most distant vertex for all vertices of $P$, while for the
vertices $w, u_1, u_2, \ldots, u_q$ the most distant vertex is $v_p$. It follows that $\varepsilon'
(v) = \varepsilon (v)$ for vertices $v_1, v_2, \ldots, v_p$, while $\varepsilon' (v) = \varepsilon
(v) + 1$ for vertices $w, u_1, u_2, \ldots, u_{q - 1}$. Also the eccentricity of $u_q$ increased by
at least $1$, and consecutively
$$
ecc (G (p + 1, q - 1)) - ecc (G (p, q)) \geq q + 1 > 0.
$$

\noindent {\it Case 3. } The length of the longest path from the vertex $w$ in $G$ is less than or
equal to $q$. This means that the pendent vertex most distant from the vertices of $P$ and $Q$ is
either $v_p$ or $u_q$, depending on the position. For each vertex from $G$, the eccentricity
increased by~1. Using the average eccentricity of a path $P \cup Q$, we have
$$
ecc (G (p + 1, q - 1)) - ecc (G (p, q)) \geq |G| > 0.
$$
Since $G$ is a nontrivial graph with at least one vertex, we have strict inequality. \medskip

\noindent This completes the proof. \qed

Chemical trees (trees with maximum vertex degree at most four) provide the graph representations of
alkanes \cite{GuPo86}. It is therefore a natural problem to study trees with bounded maximum
degree. The path $P_n$ is the unique tree with $\Delta = 2$, while the star $S_n$ is the unique
tree with $\Delta = n-1$. Therefore, we can assume that $3 \leq \Delta \leq n - 2$.

The broom $B (n, \Delta)$ is a tree consisting of a star $S_{\Delta + 1}$ and a path of length $n -
\Delta - 1$ attached to an arbitrary pendent vertex of the star (see Figure 1). It is proven that
among trees with maximum vertex degree equal to $\Delta$, the broom $B (n, \Delta)$ uniquely
minimizes the Estrada index \cite{IlSt10}, the largest eigenvalue of the adjacency matrix
\cite{LiGu07}, distance spectral radius \cite{StIl10}, etc.

\begin{figure}[ht]
  \center
  \includegraphics [width = 7cm]{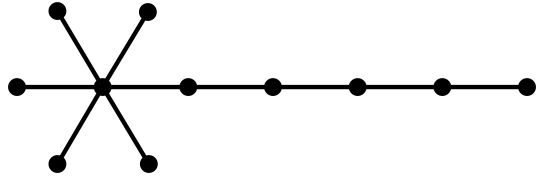}
  \caption { \textit{ The broom $B (11, 6)$. } }
\end{figure}

\begin{theorem}
\label{thm-broom} Let $T \not \cong B (n, \Delta)$ be an arbitrary tree on $n$ vertices with
maximum vertex degree $\Delta$. Then
$$
ecc (B (n, \Delta)) > ecc (T).
$$
\end{theorem}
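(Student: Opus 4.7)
The plan is to use Theorem~\ref{thm-pi} as the single tool and show that any tree $T$ with $\Delta(T) = \Delta$, $T \not\cong B(n,\Delta)$, can be converted into $B(n,\Delta)$ by a finite sequence of $\pi$-transformations, each of which strictly increases the average eccentricity while keeping the maximum degree equal to $\Delta$. Since at least one such step must be applied, the accumulated strict inequalities yield $ecc(T) < ecc(B(n,\Delta))$. Fix a vertex $v$ of degree $\Delta$ and root $T$ at $v$; the procedure proceeds in two phases.

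\emph{Phase 1: reduction to a spider at $v$.} While $T$ contains a vertex $w \neq v$ with $\deg(w) \geq 3$, choose such $w$ at maximum distance from $v$. By the choice of $w$, all descendants of $w$ have degree at most $2$, so each of the $\deg(w) - 1$ branches of $w$ going away from $v$ is a pendant path. Pick two such pendant paths of lengths $p \geq q \geq 1$ and apply Theorem~\ref{thm-pi} at $w$; iterating with the same pair until $q$ reaches $0$ merges them into a single pendant path. Repeating this consolidation at $w$ leaves $w$ with one pendant path and its parent edge, i.e.\ $\deg(w) = 2$. Because the $\pi$-transformation only ever raises a vertex's degree from $1$ to $2$ (at the tip of the extended path), no vertex of degree exceeding $\Delta$ is created; and because $v$ is never touched, $\deg(v) = \Delta$ is preserved, so $\Delta(T)$ stays equal to $\Delta$ throughout.

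\emph{Phase 2: consolidation at the center.} After Phase~1, $T$ is a spider with $\Delta$ pendant paths at $v$ of lengths $\ell_1 \geq \ell_2 \geq \cdots \geq \ell_\Delta \geq 1$, summing to $n-1$. As long as $\ell_2 \geq 2$, apply Theorem~\ref{thm-pi} at $w = v$ with $p = \ell_1$, $q = \ell_2$; the pair becomes $(\ell_1+1, \ell_2-1)$, and since $\ell_2 - 1 \geq 1$ all $\Delta$ pendant paths survive, so $\deg(v) = \Delta$ is maintained. Because $\ell_1$ strictly increases at each step and is bounded above by $n - \Delta$, the procedure terminates with $\ell_1 = n - \Delta$ and $\ell_2 = \cdots = \ell_\Delta = 1$, which is exactly $B(n,\Delta)$.

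The chief obstacle is the max-degree bookkeeping in Phase~1, since an application of Theorem~\ref{thm-pi} with $q = 1$ decreases $\deg(w)$ by one and could in principle destroy the constraint $\Delta(T) = \Delta$. Rooting at $v$ and always choosing $w$ to be the deepest branching vertex is precisely the device that prevents $v$ itself from being the merging vertex in Phase~1, so $\deg(v) = \Delta$ is preserved. As $T \not\cong B(n,\Delta)$ forces at least one of Phase~1 or Phase~2 to be nontrivial, at least one strictly increasing step is performed, and the theorem follows.
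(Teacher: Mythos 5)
Your proposal is correct and follows essentially the same route as the paper: root the tree at a vertex $v$ of degree $\Delta$, repeatedly apply Theorem~\ref{thm-pi} at the branching vertex farthest from $v$ until all subtrees hanging at $v$ become paths, and then apply it at $v$ itself until the broom $B(n,\Delta)$ is reached. Your additional bookkeeping (that the maximum degree never increases, that $\deg(v)=\Delta$ is preserved so the process terminates at $B(n,\Delta)$ rather than some other broom, and that at least one strictly increasing step occurs when $T\not\cong B(n,\Delta)$) fills in details the paper leaves implicit but does not change the argument.
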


\proof Fix a vertex $v$ of degree $\Delta$ as a root and let $T_1, T_2, \ldots, T_{\Delta}$ be the
trees attached at $v$. We can repeatedly apply the transformation described in Theorem \ref{thm-pi}
at any vertex of degree at least three with largest eccentricity from the root in every tree $T_i$,
as long as $T_i$ does not become a path. When all trees $T_{1}, T_{2},\dots, T_{\Delta}$ turn into
paths, we can again apply transformation from Theorem~\ref{thm-pi} at the vertex~$v$ as long as
there exist at least two paths of length greater than one, further decreasing the average
eccentricity. Finally, we arrive at the broom $B (n, \Delta)$ as the unique tree with maximum
average eccentricity. \qed

By direct verification, it holds
$$
ecc (B (n, \Delta)) = \frac{1}{n} \left ( \left \lfloor \frac{(n - \Delta + 2)(3 (n - \Delta + 2) -
2)}{4} \right \rfloor + (n - \Delta + 1) (\Delta - 2) \right).
$$

If $\Delta>2$, we can apply the transformation from Theorem~\ref{thm-pi} at the vertex of
degree~$\Delta$ in $B (n, \Delta)$ and obtain $B (n, \Delta-1)$. Thus, we have the following chain
of inequalities
$$
ecc (S_{n}) = ecc (B (n, n - 1)) < ecc (B (n, n - 2)) < \cdots < ecc(B (n, 3))< ecc(B
(n,2))=ecc(P_{n}).
$$

Also, it follows that $B (n, 3)$ has the second maximum average eccentricity among trees on $n$
vertices. On the other hand, the addition of an arbitrary edge in $G$ cannot decrease the average
eccentricity and clearly $\varepsilon (v) \geq 1$ with equality if and only if $deg (v) = n - 1$.

\begin{theorem}
Among graphs on $n$ vertices, the path $P_n$ attains the maximum average eccentricity index, while
the complete graph $K_n$ attains the minimum average eccentricity index.
\end{theorem}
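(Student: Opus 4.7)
I would handle the two extremes separately. For the minimum, the observation noted by the authors just before the statement already does the work: $\varepsilon(v)\ge 1$ for every vertex of a graph on $n\ge 2$ vertices, with equality iff $\deg(v)=n-1$. Averaging gives $ecc(G)\ge 1$ with equality iff every vertex has full degree, i.e.\ $G\cong K_n$.

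For the maximum, the plan is to bound $ecc(G)$ above by the average eccentricity of a spanning tree and then invoke the tree result. Given a connected $G$ on $n$ vertices, choose any spanning tree $T$ of $G$. Because $T$ is a spanning subgraph, $d_G(u,v)\le d_T(u,v)$ for all pairs $u,v$, hence $\varepsilon_G(v)\le \varepsilon_T(v)$ for every $v$, yielding $ecc(G)\le ecc(T)$. Theorem~\ref{thm-broom}, applied iteratively as in the chain
$$
ecc(S_n)=ecc(B(n,n-1)) < \cdots < ecc(B(n,2))=ecc(P_n)
$$
displayed earlier, shows that $P_n$ is the unique tree on $n$ vertices maximizing $ecc$, so $ecc(G)\le ecc(T)\le ecc(P_n)$.

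For strictness when $G\not\cong P_n$, I would split into two cases. If $G$ is a tree, then $G\not\cong P_n$ forces $\Delta(G)\ge 3$, and the chain above gives $ecc(G)<ecc(P_n)$. If $G$ contains a cycle, pick a spanning tree $T$; if $T\not\cong P_n$ we are done by the same chain, while if $T\cong P_n$ then $G$ must contain at least one chord $\{a,b\}$ of this Hamiltonian path with $|b-a|\ge 2$. Labeling the vertices of the path as $1,2,\ldots,n$ with $a<b$, the chord gives a shortcut from vertex $1$ to vertex $n$ of length at most $(a-1)+1+(n-b)=n-(b-a)<n-1$, so $\varepsilon_G(1)<n-1=\varepsilon_T(1)$ and the average strictly drops.

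The only mildly delicate point, I expect, is this last subcase --- ensuring strict inequality when the spanning tree happens to be $P_n$ itself --- but the endpoint-distance computation settles it in one line, so no real obstacle remains.
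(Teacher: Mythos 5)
Your proposal is correct and takes essentially the same route as the paper, whose entire justification is the one-sentence remark preceding the theorem: edge deletion (down to a spanning tree) cannot decrease the average eccentricity, so the maximum reduces to the tree case settled by Theorem~\ref{thm-broom} and the broom chain, while $\varepsilon(v)\ge 1$ with equality iff $\deg(v)=n-1$ gives the minimum. You are in fact more careful than the paper, in particular in handling strictness when the spanning tree happens to be a Hamiltonian path.
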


A starlike tree is a tree with exactly one vertex of degree at least 3. We denote by $S
(n_{1},n_{2},\ldots,n_{k})$ the starlike tree  of order $n$ having a branching vertex $v$ and
$$
S (n_{1},n_{2},\ldots,n_{k})-v=P_{n_1}\cup P_{n_2}\cup \ldots \cup P_{n_k},
$$
where $n_1\geq n_2\geq \ldots\geq n_k \geq 1$. Clearly, the numbers $n_1, n_2, \ldots, n_k$
determine the starlike tree up to isomorphism and $n = n_1 + n_2 + \ldots + n_k + 1$. The starlike
tree $BS (n, k) \cong S (n_{1},n_{2},\ldots,n_{k})$ is balanced if all paths have almost equal
lengths, i.e., $|n_i - n_j| \leqslant 1$ for every $1 \leqslant i < j \leqslant k$.

Let $x = (x_1, x_2, \ldots, x_n)$ and $y = (y_1, y_2, \ldots, y_n)$ be two integer arrays of
length~$n$. We say that $x$ majorizes $y$ and write $x \succ y$ if the elements of these arrays
satisfy following conditions:
\begin{enumerate}
\renewcommand{\labelenumi}{(\roman{enumi})}

\item $x_1 \geqslant x_2 \geqslant \ldots \geqslant x_n$ and $y_1 \geqslant y_2 \geqslant \ldots \geqslant
y_n$,
\item $x_1 + x_2 + \ldots + x_k \geqslant y_1 + y_2 + \ldots + y_k$,
for every $1 \leqslant k < n$,
\item $x_1 + x_2 + \ldots + x_n = y_1 + y_2 + \ldots + y_n$.
\end{enumerate}

\begin{theorem}
Let $p=(p_1, p_2, \ldots, p_k)$ and $q=(q_1, q_1, \ldots, q_k)$ be two arrays of length $k
\geqslant 2$, such that $p \succ q$ and $n - 1 = p_1 + p_2 + \ldots + p_k = q_1 + q_2 + \ldots
q_k$. Then
\begin{equation}
\label{eq:starlike} ecc (S (p_1, p_2, \ldots, p_k)) \geq ecc (S (q_1, q_2, \ldots, q_k)).
\end{equation}
\end{theorem}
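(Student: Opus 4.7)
The plan is to reduce the majorization inequality to a finite chain of applications of Theorem~\ref{thm-pi}. By the classical integer version of the Hardy--Littlewood--P\'olya theorem, whenever $p \succ q$ for sorted integer sequences of equal sum, one can pass from $q$ to $p$ by a finite sequence $q = r^{(0)}, r^{(1)}, \ldots, r^{(m)} = p$ of \emph{elementary transfers}: each $r^{(t+1)}$ is obtained from $r^{(t)}$ by choosing indices $i < j$ with $r^{(t)}_i \geq r^{(t)}_j$ and replacing the pair $(r^{(t)}_i, r^{(t)}_j)$ by $(r^{(t)}_i + 1, r^{(t)}_j - 1)$, followed by a re-sort. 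Since a starlike tree is determined up to isomorphism by the multiset of its branch lengths, the re-sorting step is harmless.

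When $k = 2$ both $S(p)$ and $S(q)$ are isomorphic to $P_n$, so (\ref{eq:starlike}) holds with equality; I therefore assume $k \geq 3$. For a single transfer $r^{(t)} \to r^{(t+1)}$ at indices $i < j$, I would take $G$ to be the subtree of $S(r^{(t)})$ obtained by deleting the two pendant paths of lengths $r^{(t)}_i$ and $r^{(t)}_j$ but retaining the branching vertex $v$. As $k \geq 3$, this $G$ still contains $v$ together with at least one further pendant path, so it is a nontrivial connected graph. Setting $w = v$ in Theorem~\ref{thm-pi} yields $G(r^{(t)}_i, r^{(t)}_j) \cong S(r^{(t)})$ and $G(r^{(t)}_i + 1, r^{(t)}_j - 1) \cong S(r^{(t+1)})$. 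The hypothesis $r^{(t)}_i \geq r^{(t)}_j \geq 1$ is satisfied, so that theorem gives $ecc(S(r^{(t)})) < ecc(S(r^{(t+1)}))$. Chaining these strict inequalities across $t = 0, 1, \ldots, m-1$ produces (\ref{eq:starlike}).

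The main subtlety to handle is that Theorem~\ref{thm-pi} requires $r^{(t)}_j \geq 1$, so the decomposition into transfers must be arranged so that every intermediate $r^{(t)}$ has all coordinates positive (otherwise a branch vanishes at some step and $S(\cdot)$ drops out of the $k$-branch family before reaching $p$). I would achieve this by a greedy choice of transfers: let $j$ be the smallest index with $r^{(t)}_j \neq p_j$; the partial-sum condition then forces $p_j > r^{(t)}_j$. Let $l > j$ be the smallest index with $r^{(t)}_l > p_l$; since $p_l \geq 1$, this forces $r^{(t)}_l \geq 2$, so moving one unit from coordinate $l$ to coordinate $j$ keeps every entry at least $1$, preserves $r^{(t+1)} \preceq p$, and strictly decreases the number of indices at which the sequence differs from $p$. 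The process therefore terminates at $r^{(m)} = p$ after finitely many steps, completing the argument.
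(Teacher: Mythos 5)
Your proof is correct, and its engine is the same as the paper's: both reduce the majorization inequality to a finite chain of single-unit transfers between two branches at the branching vertex, each step justified by Theorem~\ref{thm-pi}. The organization differs, though. The paper proceeds by induction on $k$, splitting the branch multiset into two independent subproblems whenever a proper partial sum of $p$ and $q$ coincide, and otherwise moving one unit from the last branch of length greater than one to the first branch; you instead invoke the integer Hardy--Littlewood--P\'olya decomposition directly and give a greedy rule (the first index where $r^{(t)}$ falls short of $p$ receives, the first later index where it exceeds $p$ gives) that keeps every intermediate entry positive --- a point the paper leaves implicit. Your handling of $k=2$ (all such starlike trees are paths, so equality holds) is also cleaner than the paper's, which nominally applies Theorem~\ref{thm-pi} with a trivial $G$. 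One small gap worth closing: the claim that the greedy transfer preserves $r^{(t+1)} \preceq p$ after re-sorting needs a line of argument, since when the decremented entry at position $l$ is tied with the entry at position $l+1$ the top-$l$ partial sum can increase by one; but in that case the old top-$l$ sum must have been strictly below that of $p$, because equality together with $r_{l+1}=r_l>p_l\geq p_{l+1}$ would violate $r^{(t)}\preceq p$ at index $l+1$. With that observation the argument is complete.
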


\begin{proof}
We will proceed by induction on the size of the array $k$. For $k = 2$, we can directly apply
transformation from Theorem~\ref{thm-pi} on tree $S (q_1, q_2)$ several times, in order to get $S
(p_1, p_2)$. Assume that the inequality (\ref{eq:starlike}) holds for all lengths less than or
equal to~$k$. If there exist an index $1 \leqslant m < k$ such that $p_1 + p_2 + \ldots + p_m = q_1
+ q_2 + \ldots + q_m$, we can apply the induction hypothesis  on two parts $S (q_1, q_2, \ldots,
q_m) \cup S (q_{m + 1}, q_{m + 2}, \ldots, q_k)$ and get $S (p_1, p_2, \ldots, p_m) \cup S (p_{m +
1}, p_{m + 2}, \ldots, p_k)$. Otherwise, we have strict inequalities $p_1 + p_2 + \ldots + p_m >
q_1 + q_2 + \ldots + q_m$ for all indices $1 \leqslant m < k$. We can transform tree $S (q_1, q_2,
\ldots, q_k)$ into $S (q_1 + 1, q_2, \ldots, q_{r-1}, q_r - 1, q_{r + 1}, \ldots, q_k)$, where $r$
is the largest index such that $q_r \neq 1$. The condition $p \succ q$ is preserved, and we can
continue until the array $q$ transforms into $p$, while at every step we increase the average
eccentricity.
\end{proof}

\begin{corollary}\label{cor:order}
Let $T = S (n_1, n_2, \ldots, n_k) $ be a starlike tree with $n$ vertices and $k$ pendent paths.
Then
$$
ecc (B (n, k)) \geq ecc (T) \geq ecc (BS (n, k)) .
$$
The left equality holds if and only if $T \cong B (n, k)$ and the right equality holds if and only
if $T \cong BS (n, k)$.
\end{corollary}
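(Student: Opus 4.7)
The plan is to reduce both inequalities of the corollary to the preceding majorization theorem, by showing that $B(n,k)$ and $BS(n,k)$ realize, respectively, the largest and smallest sequences of pendent-path lengths in the majorization order. Writing $n - 1 = qk + r$ with $0 \leq r < k$, the relevant extremal sequences are $b = (n-k, 1, 1, \ldots, 1)$ (with $k-1$ trailing ones) for $B(n,k)$ and $s = (\underbrace{q+1, \ldots, q+1}_{r}, \underbrace{q, \ldots, q}_{k-r})$ for $BS(n,k)$. For every non-increasing positive integer tuple $(n_1, \ldots, n_k)$ with total $n-1$, I claim
\[ b \;\succ\; (n_1, \ldots, n_k) \;\succ\; s, \]
after which an invocation of the previous theorem immediately yields the two non-strict inequalities of the corollary.

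To verify these two majorization comparisons, the left one is immediate from $n_j \geq 1$: the bound $\sum_{j \leq i} n_j \leq n - 1 - (k-i) = n - k + i - 1$ matches the $i$-th partial sum of $b$ for every $i \geq 1$. For the right comparison, I would argue by contradiction. Suppose $\sum_{j \leq i} n_j$ is strictly below the $i$-th partial sum of $s$ for some $i$. A short case split on $i \leq r$ versus $i > r$ forces $n_i \leq q$ (otherwise the monotonicity of the sequence would push the partial sum above the $s$-value); by the non-increasing ordering this yields $n_i, n_{i+1}, \ldots, n_k \leq q$, and combining this tail bound with the assumed partial-sum deficit makes the total strictly less than $n - 1 = qk + r$, contradicting the fixed total.

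For the equality characterization, I would trace the inductive proof of the previous theorem: whenever $p \succ q$ with $p \neq q$, transforming $q$ into $p$ requires at least one elementary unit-transfer $(\ldots, q_a, \ldots, q_b, \ldots) \mapsto (\ldots, q_a + 1, \ldots, q_b - 1, \ldots)$, and each such transfer is exactly one application of Theorem \ref{thm-pi} at the branching vertex to the two pendent paths of lengths $q_a$ and $q_b$, hence strict. This promotes both inequalities of the corollary to strict ones unless $(n_1, \ldots, n_k) = b$ or $(n_1, \ldots, n_k) = s$, which is equivalent to $T \cong B(n,k)$ or $T \cong BS(n,k)$ respectively. The part of the argument requiring the most care is the second majorization $(n_1, \ldots, n_k) \succ s$, where monotonicity and the fixed total must be combined to rule out partial-sum undershoots; once that is settled, the rest is routine bookkeeping on top of the already-proved theorems.
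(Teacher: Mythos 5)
Your proof is correct and follows exactly the route the paper intends: the corollary is stated without proof as an immediate consequence of the preceding majorization theorem, with $B(n,k)$ and $BS(n,k)$ corresponding to the maximal sequence $(n-k,1,\ldots,1)$ and the minimal (balanced) sequence in the majorization order. Your verification of the two majorization comparisons and your tracing of strictness through the unit-transfer steps of Theorem \ref{thm-pi} correctly fill in the details the paper leaves implicit.
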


\begin{definition}
Let $u v$ be a bridge of the graph $G$ and let $H$ and $H'$ be the non-trivial components of $G$,
such that $u \in H$ and $v \in H'$. Construct the graph $G'$ by identifying the vertices $u$ and
$v$ (and call this vertex also $u'$) with additional pendent edge $u'v'$. We say that $G' = \sigma
(G, uv)$ is a $\sigma$-transform of $G$.
\end{definition}

\begin{theorem}
Let $G' = \sigma (G, uv)$ be a $\sigma$-transform of $G$. Then,
$$
ecc (G') < ecc (G).
$$
\end{theorem}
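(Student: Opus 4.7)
The plan is to set up a vertex-by-vertex comparison between $G$ and $G'$ using the structural parameters $a = \varepsilon_{H'}(v)$ (the eccentricity of $v$ inside $H'$) and $b = \varepsilon_H(u)$ (the eccentricity of $u$ inside $H$). By the symmetry of the construction (which identifies $u$ with $v$ and then attaches a single pendant), we may assume without loss of generality that $a \geq b$. The key distance formulas I would write down first are: for $x \in H$ and $y \in H'$, one has $d_G(x,y) = d_H(x,u) + 1 + d_{H'}(v,y)$ but $d_{G'}(x,y) = d_H(x,u) + d_{H'}(v,y)$, while distances within $H$ and within $H'$ are unchanged, and $d_{G'}(v', z) = 1 + d_{G'}(u', z)$ for every $z \neq v'$.

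From these I would read off the eccentricities. For $x \in H \setminus \{u\}$,
\[
\varepsilon_G(x) = \max\bigl(\varepsilon_H(x),\, d_H(x,u) + 1 + a\bigr), \qquad \varepsilon_{G'}(x) = \max\bigl(\varepsilon_H(x),\, d_H(x,u) + a\bigr),
\]
with analogous formulas (swapping $a \leftrightarrow b$) on the $H'$ side. For the identified vertex $u'$ I would compute $\varepsilon_{G'}(u') = \max(a,b) = a$, and hence $\varepsilon_{G'}(v') = 1 + a$.

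The heart of the argument is the following claim: \emph{for every $x \in H \setminus \{u\}$, $\varepsilon_G(x) = \varepsilon_{G'}(x) + 1$.} This follows from the triangle-inequality bound $\varepsilon_H(x) \leq d_H(x,u) + \varepsilon_H(u) = d_H(x,u) + b$ together with the WLOG assumption $a \geq b$, which forces $d_H(x,u) + a$ to dominate $\varepsilon_H(x)$ in both maxima above. For $y \in H' \setminus \{v\}$ I would only need the obvious monotonicity $\varepsilon_G(y) \geq \varepsilon_{G'}(y)$. Finally, a short case analysis using $a \geq b$ yields $\varepsilon_G(u) = 1+a$, $\varepsilon_G(v) = \max(a, 1+b)$, so that
\[
\varepsilon_G(u) + \varepsilon_G(v) \;\geq\; 2a + 1 \;=\; \varepsilon_{G'}(u') + \varepsilon_{G'}(v'),
\]
with equality precisely when $a > b$.

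Summing over all vertices under the natural bijection $u \leftrightarrow u'$, $v \leftrightarrow v'$ and identity elsewhere, the total contribution from $H \setminus \{u\}$ already strictly decreases by $|H| - 1 \geq 1$ (since $H$ is nontrivial), while all other contributions are nonnegative. Dividing by $n$ gives $ecc(G') < ecc(G)$. The main obstacle I anticipate is bookkeeping the equality cases cleanly in the comparison $\varepsilon_G(u)+\varepsilon_G(v)$ versus $\varepsilon_{G'}(u')+\varepsilon_{G'}(v')$, since this pair can actually exhibit a local increase on one side and a local decrease on the other; the slack saved on $H \setminus \{u\}$ is what guarantees the strict global inequality regardless.
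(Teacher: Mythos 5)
Your proposal is correct and follows essentially the same route as the paper: a vertex-by-vertex comparison of eccentricities under the natural bijection between $V(G)$ and $V(G')$, showing no eccentricity increases in total while at least one strictly decreases. Your version is in fact more complete than the paper's (which asserts the pointwise inequality $\varepsilon_G(w) \geq \varepsilon_{G'}(w)$ without justification and locates the strict drop at a single extremal vertex), since you derive explicit formulas in terms of $a=\varepsilon_{H'}(v)$ and $b=\varepsilon_H(u)$, obtain a strict decrease on all of $H\setminus\{u\}$, and correctly handle the only delicate point, namely that the pair $(u,v)\leftrightarrow(u',v')$ must be compared as a sum because $\varepsilon_{G'}(v')$ can exceed $\varepsilon_G(v)$.
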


\proof Let $x$ be a vertex on the maximum distance from $u$ in the graph $H$ and let $y$ be a
vertex on the maximum distance from $v$ in the graph $H'$. Without loss of generality assume that
$d (u, x) \geq d (v, y)$. It can be easily seen that for arbitrary vertex $w \in G$ different than
$v$ and $y$ holds $\varepsilon_{G} (w) \geq \varepsilon_{G'} (w)$. For the vertex $y$ we have
$\varepsilon_G (y) = d (y, v) + 1 + d (u, x) > d (y, u') + d (u', x) = \varepsilon_{G'} (y)$. For
the vertex $v$ we have $\varepsilon_G (v) = 1 + d (u, x) = 1 + d (u', x) = \varepsilon_{G'} (v')$.
Finally, we have strict inequality $\sum_{w \in G}\varepsilon (w) > \sum_{w \in G'}\varepsilon
(w')$ and the result follows. \qed

Using previous theorem, one can easily prove that the star $S_n$ is the unique tree with minimal
value of the average eccentricity $ecc (S_n) = 2 - \frac{1}{n}$ among trees with $n$ vertices.
Furthermore, by repeated use of $\sigma$ transformation, the graph $S_n'$ (obtained from a star
$S_n$ with additional edge connecting two pendent vertices) has minimal value of the average
eccentricity $ecc (S_n') = 2 - \frac{1}{n}$ among unicyclic graphs with $n$ vertices. It follows by
simple analyze of the average eccentricity of extremal unicyclic graphs obtained from a triangle
$C_3$ with $a$, $b$ and $c$ pendent vertices attached to the vertices of a triangle, with $a + b +
c + 3 = n$.

\section{Conjecture regarding the independence number}

A set of vertices $S$ in a graph $G$ is independent if no neighbor of a vertex of $S$ belongs to
$S$. The independence number $\alpha = \alpha (G)$ is the maximum cardinality of an independent set
of $G$.

\begin{conjecture}[A.478-U]
\label{con-1} For every $n \geq 4$ it holds
$$
\alpha (G) + ecc (G) \leq \left\{
\begin{array}{l l}
  \frac{3n^2-2n-1}{4n} + \frac{n+1}{2} & \quad \mbox{if $n$ is odd}\\
  \frac{3n^2-4n-4}{4n} + \frac{n+2}{2} & \quad \mbox{if $n$ is even}
\end{array} \right.,
$$
with equality if and only if $G \cong P_n$ for odd $n$ and $G \cong B (n, 3)$ for even $n$.
\end{conjecture}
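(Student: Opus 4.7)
My plan is to reduce to trees and then stratify by diameter. Both $\alpha$ and $ecc$ are weakly antitone under edge addition: every independent set of $G$ remains independent in any spanning subgraph, and pairwise distances can only grow when edges are removed, so eccentricities and the average eccentricity also grow. Thus, for any spanning tree $T$ of the connected graph $G$, $\alpha(G)+ecc(G)\leq\alpha(T)+ecc(T)$, and it suffices to prove the bound for trees on $n$ vertices.

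Fix a tree $T$ on $n$ vertices with diameter $d$, and pick a diametral path $v_0v_1\cdots v_d$. The independence bound I would use is $\alpha(T)\leq n-\lceil d/2\rceil$, since any independent set meets $P_{d+1}$ in at most $\lceil (d+1)/2\rceil$ vertices. For the eccentricity I would use the classical tree property $\varepsilon(v)=\max(d(v,v_0),d(v,v_d))$. Writing $\pi(v)$ for the projection of $v$ on the diametral path and $\ell(v)=d(v,\pi(v))$, this rewrites as
\[
\varepsilon(v)=\frac{1}{2}\bigl(d+2\ell(v)+|2\pi(v)-d|\bigr).
\]
For each off-path vertex $u$, the constraints $d(u,v_0),d(u,v_d)\leq d$ force $\ell(u)\leq \min(\pi(u),d-\pi(u))$, from which one checks $2\ell(u)+|2\pi(u)-d|\leq d$, with equality iff $\ell(u)$ attains this minimum. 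Summing over $v\in T$ gives
\[
\sum_{v\in T}\varepsilon(v)\leq \frac{(2n-d-1)d+\sum_{i=0}^{d}|2i-d|}{2},
\]
and after evaluating $\sum_{i=0}^d|2i-d|$ (equal to $d(d+2)/2$ for $d$ even and $(d+1)^2/2$ for $d$ odd), the right-hand side equals $n\cdot ecc(B(n,n-d+1))$, the value attained by the broom of diameter $d$.

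Both inequalities are saturated simultaneously by $B(n,n-d+1)$: its $\alpha$ equals $n-\lceil d/2\rceil$ by an explicit independent-set count, and its off-path vertices are all pendants at $v_1$, giving $\ell(u)=1=\min(1,d-1)$ and tightness in the eccentricity bound. Hence $\max_T(\alpha+ecc)$ is attained on the broom family $B(n,2)=P_n,\,B(n,3),\ldots,B(n,n-1)=S_n$. Setting $f(d):=(n-\lceil d/2\rceil)+ecc(B(n,n-d+1))$, a short computation shows $f(d)-f(d-1)=1-d/(2n)>0$ when $d$ is even and $f(d)-f(d-1)=-(d-1)/(2n)<0$ when $d$ is odd, so $f$ attains its maximum at the largest even $d\leq n-1$: this is $d=n-1$ for odd $n$ (giving $P_n$) and $d=n-2$ for even $n$ (giving $B(n,3)$). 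Substituting back reproduces the two closed-form expressions in the conjecture, and chasing equality back through the reduction leaves only $G\cong P_n$ for odd $n$ and $G\cong B(n,3)$ for even $n$.

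The main obstacle is the eccentricity bound in step two: the identity $\varepsilon(v)=\max(d(v,v_0),d(v,v_d))$ in trees together with the clean inequality $2\ell(u)+|2\pi(u)-d|\leq d$, with the broom as the unique equality case up to isomorphism, is what lets the calculation close without any additional transformation argument.
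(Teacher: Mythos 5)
Your argument is correct, and it takes a genuinely different route from the paper at the crucial step. Both proofs begin with the same reduction (both $\alpha$ and $ecc$ can only grow when edges are deleted, so it suffices to consider trees) and both end with the same optimization over the diameter via the closed-form value on brooms. Where you diverge is in how the problem is reduced to the broom family. The paper proves a structural lemma (Lemma \ref{lem:lepa}): every tree other than a path has a pendent vertex whose deletion changes no other vertex's eccentricity; it then relocates such vertices onto $v_1$ or $v_{d-1}$ of a diametral path, reaching a broom of the same diameter without decreasing $\alpha + ecc$. You instead fix the diameter $d$ and prove two independent sharp bounds --- $\alpha(T) \leq n - \lceil d/2 \rceil$ from the intersection of an independent set with the diametral path, and $n \cdot ecc(T) \leq \frac{1}{2}\bigl((2n-d-1)d + \sum_{i=0}^{d}|2i-d|\bigr)$ from the tree identity $\varepsilon(v) = \max(d(v,v_0), d(v,v_d))$ --- and observe that both are attained simultaneously by $B(n, n-d+1)$. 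This buys you a complete bypass of the paper's most delicate ingredient (the directed-graph argument on pendent vertices, which is itself the sketchiest part of the paper's proof), at the cost of needing the simultaneous tightness of two separate bounds; the paper's lemma is more structural and reusable elsewhere. I verified your difference formulas $f(d)-f(d-1) = 1 - d/(2n)$ ($d$ even) and $-(d-1)/(2n)$ ($d$ odd) on small cases and they are exact. One small overstatement: the broom is not ``the unique equality case'' of the off-path eccentricity bound for general $d$ --- double brooms (and other trees with $\ell(u)=\min(\pi(u),d-\pi(u))$ throughout) also achieve it, as the paper's own Remark on double brooms acknowledges --- but at the optimal diameters $d=n-1$ (no off-path vertices) and $d=n-2$ (a single off-path vertex forced to be a pendent at $v_1$ or $v_{d-1}$) the equality analysis does collapse to $P_n$ and $B(n,3)$ respectively, so your characterization of the extremal graphs stands.
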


Clearly, the sum $\alpha (G) + ecc (G)$ is maximized for some tree. Let $T^*$ be the extremal tree
and let $P = v_0 v_1 \ldots v_d$ be a diametrical path of an extremal tree $T^*$. The maximum
possible independence number of this tree is $\lceil \frac{d + 1}{2} \rceil + n - d - 1$.

\begin{lemma}
\label{lem:lepa} Let $T$ be an arbitrary tree on $n$ vertices, not isomorphic to a path $P_n$. Then
there is a pendent vertex $v$ such that for each $u \in T$ it holds
$$
\varepsilon_{T} (u) = \varepsilon_{T - v} (u).
$$
\end{lemma}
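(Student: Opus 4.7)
The plan rests on a classical tree fact: for any tree $T$ and any diametrical path $v_0 v_1 \cdots v_d$, every vertex $u \in T$ satisfies
$$
\varepsilon_T(u) \;=\; \max\bigl(d(u, v_0),\, d(u, v_d)\bigr).
$$
(The standard proof takes $y$ to be the projection of $u$ onto the diametrical path and argues that if some $w$ had $d(u,w) > \max(d(u,v_0), d(u,v_d))$, then concatenating the $u$--$w$ path with a suitable half of the diametrical path would produce a path longer than $d$, contradicting maximality. This is the same fact that justifies the double-BFS diameter algorithm and I would either cite it or include a short parenthetical proof.)

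Once this is in hand, the proof of the lemma is short. Since $T \not\cong P_n$, the tree $T$ has at least three leaves. Fix any diametrical path $P = v_0 v_1 \cdots v_d$; its endpoints $v_0$ and $v_d$ are two of the leaves of $T$, and by the pigeonhole there exists a pendant vertex $v \notin \{v_0, v_d\}$. I claim that this $v$ is the vertex the lemma asks for.

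Indeed, let $u \in T$ with $u \neq v$. Removing a pendant vertex from a tree does not alter any distance between the surviving vertices, so $d_{T-v}(x, y) = d_T(x, y)$ for all $x, y \in T - v$. Since both $v_0$ and $v_d$ lie in $T - v$,
$$
\varepsilon_{T-v}(u) \;\geq\; \max\bigl(d_{T-v}(u,v_0),\, d_{T-v}(u, v_d)\bigr) \;=\; \max\bigl(d_T(u,v_0),\, d_T(u,v_d)\bigr) \;=\; \varepsilon_T(u),
$$
using the classical fact at the last equality. The reverse inequality $\varepsilon_{T-v}(u) \leq \varepsilon_T(u)$ is trivial because deleting a vertex can only remove candidates for the maximum defining the eccentricity. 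Hence $\varepsilon_{T-v}(u) = \varepsilon_T(u)$, completing the proof.

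The only nontrivial ingredient is the classical eccentricity-via-diametrical-endpoints fact; the rest is a one-line choice of $v$ and the observation that pendant removal preserves distances. The main \emph{obstacle}, if any, is making sure there is a third leaf outside the chosen diametrical path, which is immediate from ``$T$ is not a path $\Rightarrow$ $T$ has at least three leaves'' (a vertex of degree $\geq 3$ forces an extra branch, and each branch ends in a leaf).
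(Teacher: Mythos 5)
Your proof is correct, but it takes a genuinely different route from the paper's. The paper proves the lemma by building an auxiliary functional digraph on the set of pendent vertices (an arc $p_i \to p_j$ whenever $p_j$ is the \emph{unique} eccentric vertex of $p_i$), showing that any cycle of length $\geq 3$ in this digraph is impossible, and then arguing via the center of the tree that not every pendent vertex can have a unique eccentric partner -- a rather delicate case analysis. You instead invoke the classical fact that in a tree the eccentricity of every vertex is attained at an endpoint of any fixed diametrical path, i.e.\ $\varepsilon_T(u) = \max\bigl(d(u,v_0), d(u,v_d)\bigr)$, note that $T \not\cong P_n$ forces a third leaf $v \notin \{v_0, v_d\}$, and observe that deleting a leaf preserves all remaining distances while keeping both witnesses $v_0, v_d$ in place. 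Your argument is shorter, more transparent, and yields slightly more than the statement asks for: the deletable pendent vertex can be chosen off any prescribed diametrical path, which is precisely the form needed in the application immediately following the lemma (reattaching the pendent vertex near $v_1$ or $v_{d-1}$ without changing the diameter). The one ingredient you must supply or cite is the diametrical-endpoint fact itself; your parenthetical sketch (project $u$ and the alleged farther vertex $w$ onto the diametrical path and derive a path longer than the diameter) is the standard argument and closes that gap.
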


\proof Let $P = \{ p_1, p_2, \ldots, p_k\}$ be the set of all pendent vertices of $T$. Construct a
directed graph $D$ with the vertex set $P$ and a directed edge from $p_i$ to $p_j$ if the vertex
$p_j$ is the unique most distant vertex from $p_i$ in the tree $T$. This way we get a directed
graph $D$ with $k$ vertices and at most $k$ edges. If there is a vertex $p_i$ with indegree 0,
after deleting the vertex $p_i$ the eccentricities of all other pendent vertices remain the same.
Otherwise, the number of edges must be equal to $k$ and suppose that the indegrees of all vertices
$p_1, p_2, \ldots, p_k$ are greater than or equal to 1. It simply follows that in this case the
graph $D$ is composed of directed cycles of length $\geq 2$. Let $C = c_1 c_2 \ldots c_s$ be the
cycle of length $s \geq 3$ in $D$. The directed edges $c_1 \rightarrow c_2$ and $c_2 \rightarrow
c_3$ imply that the distance $d (c_1, c_2)$ is strictly less than the distance $d (c_2, c_3)$. By
extending this argument, we have
$$
d (c_1, c_2) < d (c_2, c_3) < \ldots < d (c_{s - 1}, c_s) < d (c_s, c_1),
$$
and the vertex $c_s$ is more distant from $c_1$ than the vertex $c_2$, a contradiction.

Therefore, all cycles from $D$ have exactly two vertices and for each vertex $p_i$ there is a
unique eccentric vertex $f (p_i)$ such that $d (p_i, f (p_i)) = \varepsilon (v_i) = \varepsilon (f
(v_i))$. Consider the unique path in $T$ from $p_i$ to $f (p_i)$. This path must contain a central
vertex. Otherwise one can construct strictly longer path by going from $p_i$ to a central vertex
$c$ and then to some other pendent vertex $p_j$ on distance $r$ or $r - 1$ from $c$, where $r$
denotes the radius of a tree. It follows that paths from $p_i$ to $f (p_i)$ and from $p_j$ and $f
(p_j)$ have a vertex in common (if $T$ is bicentral, the longest path from any pendent vertex must
contain both central vertices). This implies that for $p_i, f (p_i), p_j, f (p_j)$ it can not hold
that all vertices most distance from them are unique. Therefore, there is a pendent vertex $v$ such
that for all other pendent vertices $p_i$, we have $\varepsilon_{T} (p_i) = \varepsilon_{T - v}
(p_i)$.

Now, let $u$ be an arbitrary non-pendent vertex of $T$ and assume that $v$ is the unique pendent
vertex such that $\varepsilon (u) = d (u, v)$. After deleting the vertex $u$ from $T$, the tree
decomposes into connected components (at least two). Consider an arbitrary component $C$ that does
not contain vertex $v$ and one pendent vertex $p_i$ from this component. It follows that $d (v,
p_i) = d (v, u) + d (u, p_i)$, and $v$ is the most distant vertex from $p_i$, since the distances
from $u$ to any other vertex from the subtree $C$ are strictly less than $d (u, p_i) + d (v, u)$.
This is a contradiction and $d (v, u) < \varepsilon (u)$ for each $u \in V$. Finally, for each $u
\in T$ it holds $\varepsilon_{T} (u) = \varepsilon_{T - v} (u)$.\qed

By finding a pendent vertex from Lemma \ref{lem:lepa} and reattaching it to $v_1$ or $v_{d - 1}$,
we do not increase the value of $\alpha (G) + ecc (G)$, while keeping the diameter the same. It
follows that the broom tree $B (n, n - d + 2)$ has the same value $\alpha (G) + ecc (G)$ as the
extremal tree $T^*$. By direct calculation we have
\begin{eqnarray*}
ecc (B (n, \Delta)) + \alpha (B (n, \Delta)) &=& \frac{1}{n} \left ( \left \lfloor \frac{(n -
\Delta + 2)(3 (n - \Delta + 2) - 2)}{4} \right \rfloor
+ (n - \Delta + 1) (\Delta - 2) \right) \\
&& + \left \lceil \frac{n-\Delta+2}{2} \right \rceil + (\Delta - 2) \\
&=& \left\{
\begin{array}{l l}
  \frac{5n}{4} - \frac{\Delta (\Delta - 2)}{4n}-\frac{1}{2} & \quad \mbox{if $n - \Delta$ is even }\\
  \frac{5n}{4} - \frac{\Delta (\Delta - 2)}{4n}-\frac{1}{4n} & \quad \mbox{if $n - \Delta$ is odd }\\
\end{array} \right..
\end{eqnarray*}

For $\Delta = 2$ and $\Delta = 3$, we have
$$
ecc (B (n, 2)) + \alpha (B (n, 2)) = \left\{
\begin{array}{l l}
  \frac{5n}{4} - \frac{1}{2} & \quad \mbox{if $n$ is even }\\
  \frac{5n}{4} - \frac{1}{4n} & \quad \mbox{if $n$ is odd }\\
\end{array} \right.
$$
$$
ecc (B (n, 3)) + \alpha (B (n, 3)) = \left\{
\begin{array}{l l}
  \frac{5n}{4} - \frac{3}{4n}-\frac{1}{2} & \quad \mbox{if $n$ is odd }\\
  \frac{5n}{4} - \frac{3}{4n}-\frac{1}{4n} & \quad \mbox{if $n$ is even }\\
\end{array} \right.
$$
It follows that for $n \geq 3$ the maximum value of $ecc (G) + \alpha (G)$ is achieved uniquely for
$B (n, 2) \cong P_n$ if $n$ is odd, and for $B (n, 3)$ if $n$ is even. This completes the proof of
Conjecture~\ref{con-1}.

\begin{remark}
Actually the extremal trees are double brooms $D (d, a, b)$, obtained from the path $P_{d + 1}$ by
attaching $a$ endvertices to one end and $b$ endvertices to the other of the path $P_{d+1}$. The
double broom has diameter $d$, order $n = d+a+b+1$ and the same average eccentricity as the broom
$B (n, n - d)$. The authors in \cite{DaGoSw04} showed that the extremal graph with the maximum
average eccentricity for given order $n$ and radius $r$ is any double broom of diameter $2r$.
\end{remark}

\section{Conjectures regarding the Randi\' c index}

In 1975, the chemist Milan Randi\' c \cite{Ra79} proposed a topological index $Ra (G)$ under the
name 'branching index', suitable for measuring the extent of branching of the carbon-atom skeleton
of saturated hydrocarbons. Randi\' c index of a graph $G$ is defined as
$$
Ra (G) = \sum_{uv \in E} \frac{1}{\sqrt{deg (v) \cdot deg (u)}}.
$$
Later, Bollob\' as and Erd\" os \cite{BoEr98} generalized this index by replacing the exponent
$-\frac{1}{2}$ with any real number $\alpha$, which is called the general Randi\' c index. For a
comprehensive survey of its mathematical properties, see \cite{LiSh08} and the book of Li and
Gutman \cite{LiGu06}. For example, it holds
$$
Ra (P_n) = \frac{n - 3 + 2 \sqrt{2}}{2} \qquad Ra (S_n) = \sqrt{n - 1} \qquad Ra (K_n) =
\frac{n}{2}.
$$

\begin{conjecture}[A.462-U]
For every $n \geq 4$ it holds
$$
Ra (G) + ecc (G) \leq \left\{
\begin{array}{l l}
  \frac{n-3+2\sqrt{2}}{2} + \frac{3n+1}{4} \cdot \frac{n-1}{n} & \quad \mbox{if $n$ is odd}\\
  \frac{n-3+2\sqrt{2}}{2} + \frac{3n-2}{4} & \quad \mbox{if $n$ is even}
\end{array} \right.,
$$
with equality if and only if $G \cong P_n$.
\end{conjecture}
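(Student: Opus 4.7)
The plan is to combine two extremal characterisations of $P_n$: the Theorem in Section~2 of this paper, which says that $P_n$ uniquely maximises $ecc$ over all connected $n$-vertex graphs, and the classical result that $P_n$ uniquely maximises the Randi\' c index over $n$-vertex trees (see, e.g., the survey~\cite{LiGu06}).

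For a tree $G\not\cong P_n$, the strict inequality $Ra(G)<Ra(P_n)$ from the classical tree result and the weak inequality $ecc(G)\le ecc(P_n)$ from the Section~2 Theorem add up to give the claimed strict bound.

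For a graph $G$ containing at least one cycle, the Bollob\' as--Erd\" os bound~\cite{BoEr98} gives $Ra(G)\le n/2$ and hence
$$
Ra(G)-Ra(P_n) \;\le\; \tfrac{3-2\sqrt{2}}{2} \;\approx\; 0.086.
$$
It therefore suffices to establish $ecc(P_n)-ecc(G) \ge \tfrac{3-2\sqrt{2}}{2}$ for every connected non-tree $G$. The key idea is that any cycle provides a shortcut that shortens the eccentricities of a linear fraction of the vertices: for a single chord $v_iv_j$ (with $j\ge i+2$) of a spanning Hamiltonian path one can check that at least $\lceil n/2\rceil$ vertices see their eccentricity drop by at least $1$, so that $ecc$ decreases by at least $1/2$, comfortably more than $0.086$. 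When $G$ has no Hamiltonian spanning path, the diameter of $G$ is already at most $n-2$ and the gap to $ecc(P_n)\sim 3n/4$ is even larger.

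The main obstacle will be to make the "drop of $\ge 1/2$" uniform across all non-tree $G$. A clean route is to pass to a spanning tree $T$ of $G$ that contains a longest path of $G$; then $ecc(T)\ge ecc(G)$, and the drop in Randi\' c index satisfies
$$
Ra(G)-Ra(T) \;\le\; \sum_{uv\in E(G)\setminus E(T)} \frac{1}{\sqrt{d_u d_v}},
$$
since deleting an edge $uv$ weakly increases the $Ra$-contribution of every remaining edge incident to $u$ or $v$ (both $d_u$ and $d_v$ drop by $1$). Balancing this estimate against the accumulated $ecc$-gain reduces the non-tree case to the tree case plus a direct single-chord calculation on $P_n$, which can be carried out using the explicit formula for $ecc(P_n)$.
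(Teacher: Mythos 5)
Your overall reduction is sound, and your tree case is a genuinely different route from the paper's: the paper never separates trees from non-trees, but bounds $Ra(G)\le \frac n2$ for \emph{all} $G$ and beats that excess with an eccentricity deficit, whereas you handle trees using the strict maximality of $Ra$ at $P_n$ among trees together with only the weak bound $ecc(G)\le ecc(P_n)$. The problem is the non-tree case. You correctly reduce it to showing $ecc(P_n)-ecc(G)\ge\frac{3-2\sqrt{2}}{2}$ for every connected non-tree $G$, and your single-chord computation plausibly covers graphs containing a Hamiltonian path (the count of $\lceil n/2\rceil$ dropping vertices is only asserted, but it is checkable). For graphs with \emph{no} Hamiltonian path, however, neither of your suggestions works. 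The diameter remark is a non sequitur: $d(G)\le n-2$ only yields $ecc(G)\le n-2$, which for large $n$ is \emph{weaker} than $ecc(G)\le ecc(P_n)\approx\frac{3n}{4}$, so it produces no gap at all. The spanning-tree ``balancing'' is not a proof either: even granting your estimate on $Ra(G)-Ra(T)$, you would still need a quantitative bound $ecc(T)\le ecc(P_n)-c$ with $c\ge\frac{3-2\sqrt{2}}{2}$ for every non-path tree $T$, and you never establish one; what you cite gives only $ecc(T)\le ecc(P_n)$.

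That missing quantitative bound is exactly the paper's key ingredient. From Theorem~\ref{thm-broom} and the chain $ecc(B(n,3))>ecc(B(n,4))>\cdots$, every tree $T\not\cong P_n$ satisfies $ecc(T)\le ecc(B(n,3))\le ecc(P_n)-\frac12-\frac{1}{2n}$, and since the average eccentricity of a graph is at most that of any of its spanning trees, the same bound extends to connected non-path graphs (for graphs whose spanning trees are all paths, such as $C_n$, one falls back on a direct check or on your chord computation --- a small point the paper itself glosses over). With this inequality the entire case analysis collapses: $Ra(G)+ecc(G)\le\frac n2+ecc(P_n)-\frac12-\frac{1}{2n}<Ra(P_n)+ecc(P_n)$, because $\frac{n+1}{2n}>\frac{3-2\sqrt{2}}{2}$. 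Keep your tree argument if you wish, but the non-Hamiltonian non-tree case must be repaired by importing this second-maximum bound.
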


\begin{conjecture}[A.464-U]
For every $n \geq 4$ it holds
$$
Ra (G) \cdot ecc (G) \leq \left\{
\begin{array}{l l}
  \frac{n-3+2\sqrt{2}}{2} \cdot \frac{3n+1}{4} \cdot \frac{n-1}{n} & \quad \mbox{if $n$ is odd}\\
  \frac{n-3+2\sqrt{2}}{2} \cdot \frac{3n-2}{4} & \quad \mbox{if $n$ is even}
\end{array} \right.,
$$
with equality if and only if $G \cong P_n$.
\end{conjecture}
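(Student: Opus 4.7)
The plan is to reduce the bound to the case of trees, where $P_n$ simultaneously maximizes both factors. For any tree $T$ on $n$ vertices, a classical extremal result asserts that $P_n$ uniquely maximizes the Randi\' c index among $n$-vertex trees, so $Ra(T)\leq Ra(P_n)=\frac{n-3+2\sqrt 2}{2}$ with equality iff $T\cong P_n$; Theorem~2.3 of the paper supplies the companion $ecc(T)\leq ecc(P_n)$ with equality iff $T\cong P_n$ (in fact this holds among all connected graphs, not just trees). Multiplying these two nonnegative inequalities yields the claimed bound and equality characterization in the tree case.

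For a connected graph $G$ on $n$ vertices containing at least one cycle, the plan is to reduce to the tree case by iterated cycle-edge deletion. For any edge $e=uv$ on a cycle, $G-e$ is still connected and no eccentricity decreases, so
\[
Ra(G-e)\cdot ecc(G-e) - Ra(G)\cdot ecc(G) = \bigl(Ra(G-e)-Ra(G)\bigr)\,ecc(G-e) + Ra(G)\,\bigl(ecc(G-e)-ecc(G)\bigr),
\]
where the second summand is nonnegative. I would pick $e$ to be a cycle edge maximizing $\deg(u)\cdot\deg(v)$, since then the direct Randi\' c loss $\frac{1}{\sqrt{\deg(u)\deg(v)}}$ is smallest, while the decrement of $\deg(u)$ and $\deg(v)$ by one strictly \emph{increases} the Randi\' c contribution of every other edge incident to $u$ or $v$. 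Iterating this operation until no cycles remain produces a spanning tree $T$ of $G$ with $Ra(T)\cdot ecc(T) \geq Ra(G)\cdot ecc(G)$, after which the tree case closes the argument.

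The main obstacle I expect is justifying the edge-selection lemma in the non-tree step. The Randi\' c index can strictly decrease under some edge removals, so one must verify that either the ``best'' cycle edge satisfies $Ra(G-e)\geq Ra(G)$, or the eccentricity gain $Ra(G)\bigl(ecc(G-e)-ecc(G)\bigr)\geq Ra(G)/n$ (which holds whenever the sum of eccentricities strictly increases, since individual eccentricities are integers) compensates for any Randi\' c loss. A short case analysis on the degrees of the endpoints of $e$ (both of degree $2$; one of degree $2$ and the other of degree $\geq 3$; both of degree $\geq 3$), combined with the coarse bounds $Ra(G)\leq n/2$ (from the AM-GM estimate $\frac{1}{\sqrt{ab}}\leq \frac{1}{2}(\frac{1}{a}+\frac{1}{b})$) and $ecc(G)\leq ecc(P_n)$, should settle this. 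Strict inequality in the conclusion follows because the tree step is strict unless $T\cong P_n$, and any cycle-edge deletion step is strict as soon as $G$ has a cycle.
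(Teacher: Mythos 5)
Your tree case is fine ($P_n$ maximizes both $Ra$ and $ecc$ among trees, so the product and the equality characterization follow), but the reduction from general graphs to trees is where the proof breaks, and the lemma you defer to a ``short case analysis'' is in fact false. Consider the octahedron $K_{2,2,2}$ ($n=6$, $4$-regular): every vertex has eccentricity $2$, so $ecc(G)=2$ and $Ra(G)=3$, giving $Ra(G)\cdot ecc(G)=6$. Deleting any edge $uv$ leaves every eccentricity equal to $2$ (each vertex still reaches its non-neighbours in two steps through common neighbours), so $ecc(G-e)=ecc(G)$, while $Ra(G-e)=\sqrt{3}+\tfrac54\approx 2.98<3$ because $G-e$ is no longer regular. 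Hence $Ra(G-e)\cdot ecc(G-e)<Ra(G)\cdot ecc(G)$ for \emph{every} cycle edge, and since all edges are equivalent under the automorphism group your selection rule (maximize $\deg(u)\deg(v)$) cannot help. Neither branch of your dichotomy holds: $Ra$ strictly drops and the eccentricity sum does not increase at all. So the greedy cycle-edge deletion is not monotone for the product, and the iteration collapses; the octahedron is of course not a counterexample to the conjecture itself, only to your reduction.

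The paper sidesteps this entirely by decoupling the two factors rather than tracking them through a graph transformation: for $G\not\cong P_n$ it uses the global bound $Ra(G)\le \tfrac n2=Ra(P_n)+\tfrac{3-2\sqrt2}{2}$ (attained by regular graphs) together with the fact that the \emph{second} largest average eccentricity among connected $n$-vertex graphs is $ecc(B(n,3))\le ecc(P_n)-\tfrac12-\tfrac1{2n}$, and then checks numerically that the product of these two bounds is strictly below $Ra(P_n)\cdot ecc(P_n)$. If you want to salvage your approach you would need either a genuinely different argument for dense graphs, or to compare $G$ directly with a well-chosen spanning tree (e.g.\ a DFS/Hamiltonian-path tree) rather than relying on one-edge-at-a-time monotonicity; as written, the proposal has a genuine gap.
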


We will use one classical result from the theory of Randi\' c index.

\begin{theorem} \cite{CaGuHaPa03,PaGu01}
Among all graphs of order $n$, regular graphs attain the maximum Randi\' c index $\frac{n}{2}$.
\end{theorem}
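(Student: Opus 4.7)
The plan is to bound each summand in
$$
Ra(G) = \sum_{uv \in E} \frac{1}{\sqrt{deg(u) \cdot deg(v)}}
$$
using the AM--GM inequality, applied not to $deg(u), deg(v)$ themselves but to their reciprocals. Writing
$$
\frac{1}{\sqrt{deg(u) \cdot deg(v)}} = \sqrt{\frac{1}{deg(u)} \cdot \frac{1}{deg(v)}},
$$
AM--GM immediately gives the pointwise bound
$$
\frac{1}{\sqrt{deg(u) \cdot deg(v)}} \leq \frac{1}{2}\left(\frac{1}{deg(u)} + \frac{1}{deg(v)}\right),
$$
with equality if and only if $deg(u) = deg(v)$. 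This is the one nontrivial ingredient; the rest is bookkeeping.

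Next I would sum the pointwise bound over all edges and swap the order of summation. Each vertex $v$ with $deg(v) \geq 1$ contributes $\frac{1}{deg(v)}$ to exactly $deg(v)$ edge-terms, and isolated vertices contribute nothing to $Ra(G)$ in the first place. Hence
$$
Ra(G) \leq \frac{1}{2} \sum_{v : deg(v) \geq 1} 1 \leq \frac{n}{2},
$$
where the last inequality is strict precisely when $G$ has an isolated vertex. To close the argument, I would verify the lower bound by direct computation: a $k$-regular graph $G$ on $n$ vertices has $nk/2$ edges, each of which contributes $1/k$ to the Randi\'c index, so $Ra(G) = n/2$ exactly. Combining the two directions gives the stated maximum, and tracing the equality case identifies the extremal graphs as those with no isolated vertex whose every connected component is regular (the same $k$ across components is not required, although for connected graphs this reduces to ordinary regularity).

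I do not anticipate a serious obstacle here: the argument is a single application of AM--GM followed by double counting, and the result is a classical observation. The only mild subtleties are remembering to exclude isolated vertices from the equality case and being precise that equality in the pointwise step only forces regularity componentwise. Neither affects the application we need in this section, which only uses the upper bound $Ra(G) \leq n/2$.
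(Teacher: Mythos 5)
Your proof is correct: the pointwise AM--GM bound $\frac{1}{\sqrt{\deg(u)\deg(v)}}\le\frac12\bigl(\frac{1}{\deg(u)}+\frac{1}{\deg(v)}\bigr)$ followed by double counting gives $Ra(G)\le\frac{n}{2}$, and your equality analysis (componentwise regular, no isolated vertices) is the right one. The paper does not prove this statement at all --- it quotes it from \cite{CaGuHaPa03,PaGu01} --- and your argument is the standard proof of that cited result, so there is nothing to reconcile; note also that since the paper works with connected graphs, the equality case reduces to ordinary regularity exactly as you observe.
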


Since the second maximum value of the average eccentricity index among connected graphs on $n$
vertices is achieved for the broom $B (n, 3)$, we have the following upper bound
$$
ecc (B (n, 3)) = \frac{(n - 1) \cdot ecc (P_{n - 1}) + (n - 2)}{n} \leq ecc (P_n) - \frac{1}{2} -
\frac{1}{2n},
$$
with equality if and only if $n$ is odd.

For $G \not \cong P_n$, it follows
\begin{eqnarray*}
Ra (G) + ecc (G) &\leq& \frac{n}{2} + ecc (B (n, 3)) \\
&=& \frac{n}{2} + ecc (P_n) - \frac{1}{2} - \frac{1}{2n} \\
&\leq& Ra (P_n) + ecc (P_n) + \frac{3-2\sqrt{2}}{2} - \frac{n +
1}{2n} \\
&<& Ra (P_n) + ecc (P_n),
\end{eqnarray*}
since $\frac{n+1}{2n} > \frac{1}{2} > \frac{3 - 2 \sqrt{2}}{2}$ holds for all $n \geq 1$.

For the second conjecture, similarly we have
\begin{eqnarray*}
Ra (G) \cdot ecc (G) &\leq& \frac{n}{2} \cdot ecc (B (n, 3)) \\
&=& \left(Ra (P_n) + \frac{3-2\sqrt{2}}{2} \right) \left (ecc (P_n) - \frac{1}{2} - \frac{1}{2n} \right) \\
&=& Ra (P_n) \cdot ecc (P_n) + \frac{3-2\sqrt{2}}{2} \cdot ecc (P_n) - \frac{n+1}{4} \\
&<& Ra (P_n) \cdot ecc (P_n),
\end{eqnarray*}
since $ecc (P_n) < \frac{3n}{4} - \frac{1}{4} < \frac{n+1}{2 (3 - 2 \sqrt{2})}$.

This completes the proof of both conjectures.

\section{Conjecture regarding the clique number}

The clique number of a graph $G$ is the size of a maximal complete subgraph of $G$ and it is
denoted as $\omega (G)$.

The lollipop graph $LP (n, k)$ is obtained from a complete graph $K_k$ and a path $P_{n - k + 1}$,
by joining one of the end vertices of $P_{n - k + 1}$ to one vertex of $K_k$ (see Figure 2). An
asymptotically sharp upper bound for the eccentric connectivity index is derived independently in
\cite{DoSaVu10} and \cite{MoMuSw10}, with the extremal graph $LP (n, \lfloor n / 3 \rfloor)$.
Furthermore, it is shown that the eccentric connectivity index grows no faster than a cubic
polynomial in the number of vertices.

\begin{figure}[ht]
  \center
  \includegraphics [width = 8cm]{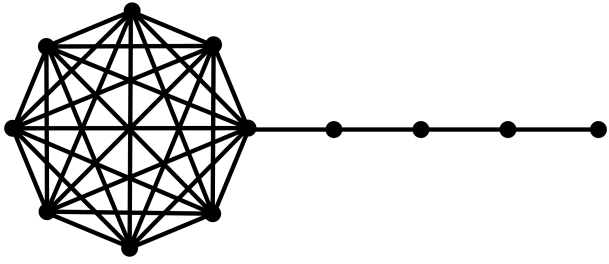}
  \caption { \textit{ The lollipop graph $LP (12, 8)$. } }
\end{figure}

\begin{conjecture} [A.488-U]
For every $n \geq 4$ the maximum value of $ecc (G) \cdot \omega (G)$ is achieved for some lollipop
graph.
\end{conjecture}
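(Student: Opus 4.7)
The plan is to show that any graph $G^*$ maximizing $ecc(G) \cdot \omega(G)$ on $n$ vertices must be isomorphic to $LP(n,k)$ for some $k$. Let $k = \omega(G^*)$ and fix a maximum clique $K$ of size $k$ in $G^*$. The strategy consists of three successive structural reductions, each weakly increasing the product $ecc(G) \cdot \omega(G)$: first, delete redundant edges outside $K$ so that the non-clique part of $G^*$ is minimally connected; second, consolidate so that the non-clique structure attaches to $K$ at only one vertex; and third, straighten the resulting tree into a single pendent path. The final configuration is precisely $LP(n,k)$, and the conjecture follows by maximising over $k$.

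The first reduction is straightforward: deleting any non-$K$ edge leaves $K$ as a clique, so $\omega$ is preserved, and cannot decrease any eccentricity; we may therefore delete edges one by one until removing any further non-$K$ edge would disconnect $G^*$, at which point contracting $K$ to a single vertex yields a tree. The third reduction is also routine once the second is in place: when $G^*$ is $K$ joined at a single vertex $v \in V(K)$ to a rooted tree $T$, Theorem \ref{thm-pi} applied iteratively at the branching vertices of $T$ strictly increases $ecc$ while preserving $\omega$, and after finitely many steps $T$ becomes a single pendent path, giving $G^* = LP(n,k)$. The second reduction is the crux: if attachments occur at distinct vertices $x, y \in V(K)$, the proposal is to transfer the subtree hanging off $y$ and append it at $x$, merging into a single pendent path. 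Because individual eccentricities can shift in either direction under such a move, the argument must compare the full sum of eccentricities directly. A small example with $k = 3$ confirms the direction of the inequality: the lollipop $LP(6,3)$ has sum of eccentricities $20$, strictly larger than the value $19$ obtained from the split configuration with a path $u_1 u_2$ attached at one clique vertex and a pendent $u_3$ attached at another.

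The main obstacle is executing the second reduction rigorously. The key structural reason that the consolidated graph wins is that in $LP(n,k)$ the tip of the pendant path sits at maximum distance simultaneously from all $k-1$ non-attachment vertices of $K$, inflating many eccentricities, whereas in a split configuration each tip is at maximum distance from only a single distant vertex. A convenient way to exploit this is to move one leaf at a time from the subtree at $y$ onto the path at $x$ and verify a local monotonicity of the sum of eccentricities, splitting into subcases according to the depths of the subtrees at $x$ and $y$ relative to the diameter of the remaining graph---an analysis parallel to the three-case split in the proof of Theorem \ref{thm-pi}. Once this step is established, reductions (i) and (iii) compose to finish the argument, and the final maximisation over $k$ is a finite computation using the explicit formula for $ecc(LP(n,k))$ together with $\omega(LP(n,k)) = k$.
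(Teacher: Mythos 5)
Your proposal follows essentially the same route as the paper: delete non-clique edges (which preserves $\omega$ and cannot decrease eccentricities), apply Theorem \ref{thm-pi} to reduce the attached trees to pendent paths, and then consolidate the paths attached at different clique vertices into a single longest path to arrive at $LP(n,k)$, finishing with the optimization over $k$ via the explicit formula $ecc(LP(n,k)) = ecc(B(n,k))$. The consolidation step that you correctly single out as the crux is exactly the step the paper also leaves as a sketch (``the transformation similar to $G(p,q)\mapsto G(p+1,q-1)$''), so your outline is faithful to, and if anything more candid than, the published argument.
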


Let $C$ be an arbitrary clique of size $k$. Since the removal of the edges potentially increases
$ecc (G)$, we can assume that trees are attached to the vertices of $C$. Then by applying Theorem
\ref{thm-pi}, we get the graph composed of the clique $C$ and pendent paths attached to the
vertices of $C$. Using the transformation similar to $G (p, q) \mapsto G (p + 1, q - 1)$ where we
increase the length of the longest path attached to $C$, it follows that the extremal graph is
exactly $LP (n, k)$. Since $ecc (LP (n, k)) = ecc (B (n, k))$, we have
\begin{eqnarray*}
ecc (LP (n, k)) \cdot \omega (LP (n, k)) &=& \frac{1}{n} \cdot \Big ( (n - k - 2) ecc (P_{n - k -
2}) + (n - k + 1)(k - 2) \Big ) \cdot k \\
&=& \frac{k}{n} \cdot \left \lfloor \frac{(-k^2 - 2 k (-1 + n) + n (2 + 3 n))}{4} \right \rfloor.
\end{eqnarray*}

Let $f(x) = x\left(-x^2+2 x -2 x n+ 2 n+3 n^2\right)$ and $f' (x) = -3 x^2-4 x (n-1)+n (3 n + 2)$.
By simple analysis for $x \in [1, n]$, it follows that the function $f (x)$ achieves the maximum
value exactly for the larger root of the equation $f' (x) = 0$. Therefore, the maximum value of
$ecc (G) \cdot \omega (G)$ is achieved for integers closest to
$$
k^* = \frac{1}{3} \left(2-2 n+\sqrt{4-2 n+13 n^2}\right).
$$

\section{Conjecture regarding the minimum vertex degree}

A matching in a graph $G$ is a set of edges in which no two edges are adjacent. A vertex is matched
(or saturated) if it is incident to an edge in the matching; otherwise the vertex is unmatched. A
perfect matching (or 1-factor) is a matching which matches all vertices of the graph.


\begin{conjecture}[A.100-U]
\label{con-6} For every $n \geq 4$ it holds
$$
\delta (G) \cdot ecc (G) \leq \left\{
\begin{array}{l l}
  2n - 2 & \quad \mbox{if $n$ is even }\\
  (n - 2)(2 - \frac{1}{2}) & \quad \mbox{if $n$ is odd }\\
\end{array} \right.,
$$
with equality if and only if $G \cong K_n \setminus M$, where $M$ is a perfect matching if $n$ is
even, or a perfect matching on $n-1$ vertices with an additional edge between the non-saturated
vertex and another vertex if $n$ is odd.
\end{conjecture}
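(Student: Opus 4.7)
The plan is to \emph{refute} Conjecture~\ref{con-6} rather than prove it, since on direct computation the purported extremal graphs fail to achieve the claimed bound in the even case and in fact violate it in the odd case. For even $n$ with $G = K_n \setminus M$ and $M$ a perfect matching, every vertex loses exactly one edge, so $\delta(G) = n-2$; any two non-adjacent vertices (an $M$-pair) share $n-4 \geq 1$ common neighbors for $n \geq 5$, so $ecc(G) = 2$ and the product is $2(n-2) = 2n-4$, not the alleged $2n-2$. For odd $n$, the described graph has one distinguished vertex incident to two removed edges (both its matching partner and the additional edge), so $\delta(G) = n-3$, while $ecc(G) = 2$ again by a common-neighbor check; hence the product is $2n-6$, which for every odd $n \geq 7$ strictly exceeds the alleged bound $(n-2)(2-\frac{1}{2}) = \frac{3n-6}{2}$. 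In particular, the purported extremal graph already contradicts its own conjectured inequality.

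To exhibit a cleaner counterexample valid for all odd $n \geq 5$, I would take $G = K_n \setminus M_0$ where $M_0$ is a near-perfect matching saturating all but one vertex $w$. Then $w$ retains degree $n-1$ and every other vertex has degree $n-2$, so $\delta(G) = n-2$; since $w$ is adjacent to every other vertex, any non-adjacent pair shares $w$ as a common neighbor, giving $ecc(G) = 2$. Hence $\delta(G) \cdot ecc(G) = 2(n-2) = 2n-4$, which exceeds $\frac{3n-6}{2}$ for every $n > 2$, refuting the odd-$n$ bound. Together with the even-$n$ computation, this refutes both halves of the conjecture.

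The only real obstacle is formulating a correct replacement. The evidence above suggests $\delta(G) \cdot ecc(G) \leq 2n-4$, attained by $K_n$ minus a perfect or near-perfect matching according to parity. A proof of this corrected bound would combine the neighbor-counting observation that $\delta(G) \geq n-3$ forces $ecc(G) \leq 2$ (two non-adjacent vertices at distance $\geq 3$ would need $\geq 2(n-3)+2 > n$ distinct vertices in the union of their neighborhoods and themselves) with a standard diameter bound such as $d(G) \leq 3n/(\delta+1)-1$ to handle small $\delta$, plus inspection of a few small $n$; the refutation itself, however, requires only the counterexamples above.
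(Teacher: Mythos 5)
Your proposal takes a fundamentally different route from the paper, and it has genuine gaps. For even $n$ your computation that $K_n\setminus M$ gives $\delta\cdot ecc=2(n-2)=2n-4$ is correct, but this only falsifies the equality characterization; it does not touch the inequality $\delta(G)\cdot ecc(G)\le 2n-2$, so for even $n$ you have produced no counterexample to the bound itself. For odd $n$ your violation rests on reading $(n-2)(2-\frac{1}{2})$ literally; this is evidently a garbled transcription of $(n-2)(2-\frac{1}{n})$, which is exactly the value attained by $K_n$ minus a near-perfect matching. Indeed, your ``cleaner counterexample'' $K_n\setminus M_0$ is that very graph, and your claim that $ecc(G)=2$ for it is wrong: the unsaturated vertex $w$ is adjacent to all others, so $\varepsilon(w)=1$ and $ecc(G)=2-\frac{1}{n}$, giving the product $(n-2)(2-\frac{1}{n})$ rather than $2n-4$. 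So against the intended bound your odd-$n$ examples refute nothing, and against the literal bound your argument is exploiting a typo rather than the mathematical content of the conjecture.

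The deeper problem is your proposed correction $\delta(G)\cdot ecc(G)\le 2n-4$: it is false, and showing this is precisely the paper's refutation. The paper constructs the graph $PC(k,\delta)$, a chain of $k$ cliques of order about $\delta+1$, with $n=k(\delta+1)+2$ vertices, minimum degree $\delta$, and average eccentricity $\frac{9k}{4}-\frac{1}{2}+\frac{3(k-2)}{2n}$; hence
$$
\delta\cdot ecc\bigl(PC(k,\delta)\bigr)=\frac{9k\delta}{4}-\frac{\delta}{2}+\frac{3\delta(k-2)}{2n}\approx\frac{9n}{4},
$$
which for $k\ge\delta\ge 10$ exceeds $2n-2$, and a fortiori $2n-4$ and any bound of the form $2n+O(1)$. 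Your sketch for proving $2n-4$ cannot be repaired: the case $\delta\ge n-3$ is fine, but the diameter bound $d(G)\le 3n/(\delta+1)-1$ only yields $\delta\cdot ecc\lesssim 3n$, and no argument can close the gap because the statement is false. The essential missing idea in your proposal is a construction in which the product genuinely grows faster than $2n$ --- the path-of-cliques graphs that realize the true asymptotic constant $\frac{9}{4}$.
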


Let $K_n \setminus \{uv\}$ be the graph obtained from a complete graph $K_n$ by deleting the edge
$uv$. Define the almost-path-clique graph $PC (k, \delta)$ from a path $P_k$ by replacing each
vertex of degree 2 by the graph $K_{\delta + 1} \setminus \{u_i v_i\}$, $i = 2, 3, \ldots, k - 1$
and replacing pendent vertices by the graphs $K_{\delta + 2} \setminus \{u_1 v_1\}$ and $K_{\delta
+ 2} \setminus \{u_k v_k\}$. Furthermore, for each $i = 1, 2, \ldots, k - 1$ the vertices $u_i$ and
$v_{i + 1}$ are adjacent (see Figure 3).

\begin{figure}[ht]
  \center
  \includegraphics [width = 14cm]{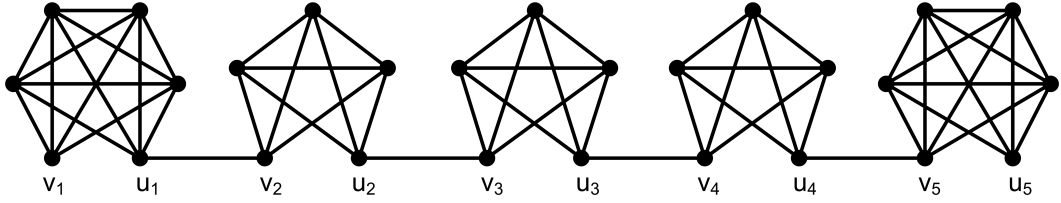}
  \caption { \textit{ The graph $PC (5, 4)$ with 27 vertices.} }
\end{figure}

The graph $PC (k, \delta)$ has $n = k (\delta + 1) + 2$ vertices and minimum vertex degree
$\delta$. Assume that $k$ is an even number. For each $i = 1, 2, \ldots, \frac{k}{2}$, we have the
following contributions of the vertices in $K_{\delta + 1} \setminus \{u_i v_i\}$:
\begin{itemize}
\item the vertex $u_i$ has eccentricity $\frac{3k}{2} + 3 (\frac{k}{2} - i) = 3k - 3i$,
\item the vertex $v_i$ has eccentricity $\frac{3k}{2} + 2 + 3 (\frac{k}{2} - i) = 3k - 3i + 2$,
\item the remaining $\delta - 1$ or $\delta$ vertices have eccentricity $\frac{3k}{2} + 1 + 3 (\frac{k}{2} -
i) = 3k - 3i + 1$.
\end{itemize}

Finally, the average eccentricity of the graph $PC (k, \delta)$ is equal to
\begin{eqnarray*}
ecc (PC (k, \delta)) &=& \frac{2}{n} \cdot \left( 3k - 2 + \sum_{i = 1}^{k/2} (3k - 3i) + (3k -3i
+ 2) + (\delta - 1)(3k - 3i + 1) \right) \\
&=& \frac{1}{k (\delta + 1) + 2} \cdot \left ( \frac{9 \delta k^2}{4} + \frac{9k^2}{4}+ \frac{11
k}{2}-\frac{\delta k}{2} -4 \right) \\
&=& \frac{9k}{4} - \frac{1}{2} + \frac{3 (k - 2)}{2 (k \delta + k + 2)}.
\end{eqnarray*}

The product of the average eccentricity and the minimum vertex degree is equal to
$$
ecc (PC (k, \delta)) \cdot \delta (PC (k, \delta)) = \frac{9k\delta}{4} - \frac{\delta}{2} +
\frac{3 \delta (k - 2)}{2 (k \delta + k + 2)}.
$$

For each $k \geq \delta \geq 10$ we have the following inequality
$$
\frac{9k \delta}{4} - \frac{\delta}{2} > 2 (k \delta + k + 2) - 4,
$$
which is equivalent with
$$
k \delta - 8k - 2 \delta = k (\delta - 8) - 2 \delta > 0.
$$

This refutes Conjecture \ref{con-6}, and one can easily construct similar counterexamples for odd
$k$ or $n$ not of the form $k (\delta + 1) + 2$. Note that this construction is very similar to the
one described in \cite{DaGoSw04}, but derived independently.

\section{Concluding remarks}

In this paper we studied the mathematical properties of the average eccentricity $ecc (G)$ of a
connected graph $G$, which is deeply connected with the eccentric connectivity index. We resolved
or refuted five conjectures on the average eccentricity and other graph invariants -- clique
number, Randi\' c index, independence number and minimum vertex degree.

We conclude the paper by restating some other conjectures dealing with the average eccentricity.
All conjectures were generated by AGX system \cite{Ao06} and we also verified them on the set of
all graphs with $\leq 10$ vertices and trees with $\leq 20$ vertices (with the help of
Nauty~\cite{Nauty} for the generation of non-isomorphic graphs).

\begin{conjecture}[A.462-L]
For every $n \geq 4$ it holds
$$
Ra (G) + ecc (G) \geq \sqrt{n - 1} + 2 - \frac{1}{n},
$$
with equality if and only if $G \cong S_n$.
\end{conjecture}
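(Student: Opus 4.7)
The plan is to combine the classical Bollob\'as--Erd\H{o}s lower bound $Ra(G) \geq \sqrt{n-1}$ for connected graphs on $n$ vertices (with equality if and only if $G \cong S_n$) with a case analysis on the number $k$ of \emph{universal} vertices of $G$, i.e.\ vertices of degree $n-1$, equivalently of eccentricity~$1$. The key observation is that if $G$ contains $k \geq 1$ universal vertices, then every non-universal vertex is adjacent to each universal vertex and reaches any other non-universal vertex in two steps through a universal vertex, so it has eccentricity exactly~$2$; thus $ecc(G) = 2 - k/n$. If $k = 0$, no vertex has eccentricity~$1$ and trivially $ecc(G) \geq 2$.

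For $k = 0$ the graph $G$ cannot be a star, so Bollob\'as--Erd\H{o}s delivers the strict bound $Ra(G) > \sqrt{n-1}$ which, combined with $ecc(G) \geq 2$, settles the target inequality strictly. For $k = 1$ we have $ecc(G) = 2 - 1/n$ exactly, and adding Bollob\'as--Erd\H{o}s gives both the inequality and the equality case $G \cong S_n$. The substantive case is $k \geq 2$, where $ecc(G) = 2 - k/n$ falls short of $2 - 1/n$ by $(k-1)/n$, so one needs the quantitative strengthening
$$
Ra(G) \geq \sqrt{n-1} + \frac{k-1}{n} \qquad \text{whenever $G$ has $k \geq 2$ universal vertices.}
$$
To attack this, I would exploit the structural rigidity forced by $k$ universal vertices: they induce a $K_k$, all $k(n-k)$ cross-edges between $U$ and $V \setminus U$ are present, and every non-universal vertex has degree at least $k$. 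Splitting $Ra(G)$ into its three contributions (edges inside $U$, across the cut, and inside $V \setminus U$), the inside-$U$ part equals $\binom{k}{2}/(n-1)$ and the cross-cut part can be bounded from below by Jensen's inequality applied to $x \mapsto 1/\sqrt{x}$ together with the degree-sum estimate $\sum_{u \notin U} d_G(u) \geq k(n-k)$; the inside-$(V\setminus U)$ contribution is discarded as nonnegative.

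The main obstacle is precisely this quantitative step, and it is a \emph{genuine} obstacle: the target inequality in fact fails for small $n$ already at $G = K_n$. For instance, $Ra(K_4) + ecc(K_4) = 2 + 1 = 3$, whereas $\sqrt{3} + \tfrac{7}{4} \approx 3.482$, and analogous violations occur for $n = 5$ and $n = 6$. Any complete argument must therefore restrict to $n \geq 7$ (the first value at which $n/2 - 1 + 1/n \geq \sqrt{n-1}$, i.e.\ the $G = K_n$ instance of the target bound), or else the conjectured lower bound needs to be amended for small $n$; beyond this threshold the convex-analytic estimate sketched above should yield a self-contained proof, with the equality case delivered for free by Bollob\'as--Erd\H{o}s.
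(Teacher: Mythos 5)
First, note that the paper does not actually prove this statement: Conjecture A.462-L appears in the concluding section as one of several AGX conjectures restated for future research, checked only computationally on graphs with at most $10$ vertices. So there is no proof in the paper to measure your attempt against. Your most valuable contribution is the observation that the conjecture as stated is \emph{false} for small $n$: indeed $Ra(K_4)+ecc(K_4)=2+1=3<\sqrt{3}+\tfrac{7}{4}\approx 3.482$, and the analogous violations for $n=5,6$ also check out. This is correct (and contradicts the paper's claimed verification); by analogy with Conjecture A.464-L, whose product version explicitly lets $K_n$ be extremal for $n\le 13$, the sum version evidently needs a separate clause for $n\le 6$. Your reduction is also sound as far as it goes: with $k$ the number of universal vertices, the identity $ecc(G)=2-k/n$ for $k\ge 1$ and the bound $ecc(G)\ge 2$ for $k=0$ are correct, and combined with the Bollob\'as--Erd\H{o}s bound $Ra(G)\ge\sqrt{n-1}$ they dispose of the cases $k=0$ and $k=1$, including the equality characterization.

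However, the case $k\ge 2$, which you rightly call the substantive one, is a genuine gap, and the estimate you sketch cannot close it. Discarding the edges inside $V\setminus U$ and using only $d_v\le n-1$ (equivalently, that every edge contributes at least $\frac{1}{n-1}$ to $Ra$), your decomposition yields $Ra(G)\ge \frac{k(k-1)}{2(n-1)}+\frac{k(n-k)}{n-1}=\frac{k(2n-k-1)}{2(n-1)}$, which for $k=2$ equals $2-\frac{1}{n-1}$ and is hopelessly short of the required $\sqrt{n-1}+\frac{1}{n}$ once $n$ is large. Moreover, the auxiliary input you propose, $\sum_{v\notin U}d_G(v)\ge k(n-k)$, points the wrong way: Jensen applied to the convex function $x\mapsto 1/\sqrt{x}$ converts an \emph{upper} bound on $\sum_{v\notin U} d_G(v)$ into a lower bound on $\sum_{v\notin U} 1/\sqrt{d_G(v)}$, so a lower bound on the degree sum buys you nothing. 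What is actually needed is a quantitative stability version of Bollob\'as--Erd\H{o}s, namely $Ra(G)\ge \sqrt{n-1}+\frac{k-1}{n}$ for every $n\ge 7$ and every graph with $k\ge 2$ universal vertices (plausibly with the complete split graph, $K_k$ joined to $n-k$ independent vertices, as the extremal case), and neither your sketch nor anything in the paper supplies it. In short: you have correctly refuted the conjecture as stated for $n\le 6$, but you have not proved it for $n\ge 7$.
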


\begin{conjecture}[A.464-L]
For every $n \geq 4$ it holds
$$
Ra (G) \cdot ecc (G) \geq \left\{
\begin{array}{l l}
  \frac{n}{2} & \quad \mbox{if $n \leq 13$ }\\
  \sqrt{n - 1} \cdot \left (2 - \frac{1}{n} \right) & \quad \mbox{if $n > 13$ }\\
\end{array} \right.,
$$
with equality if and only if $G \cong K_n$ for $n \leq 13$ or $G \cong S_n$ for $n > 13$.
\end{conjecture}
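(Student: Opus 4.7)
The plan combines the Bollob\'as--Erd\H os lower bound $Ra(G) \geq \sqrt{n-1}$ (with equality iff $G \cong S_n$) with a case analysis on the diameter of $G$. Direct computation gives $Ra(K_n) \cdot ecc(K_n) = n/2$ and $Ra(S_n) \cdot ecc(S_n) = \sqrt{n-1}(2-1/n)$; one checks $n/2 < \sqrt{n-1}(2-1/n)$ precisely for $n \leq 13$, which produces the conjectured two-part bound. Writing $f_n := \min(n/2, \sqrt{n-1}(2-1/n))$, it suffices to prove $Ra(G) \cdot ecc(G) \geq f_n$ for every connected $G$ on $n \geq 4$ vertices, with equality only at $K_n$ when $n \leq 13$ and at $S_n$ when $n \geq 14$.

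If $d(G) \geq 3$, then $r(G) \geq \lceil d(G)/2 \rceil \geq 2$, so $ecc(G) \geq 2$ and the product is at least $2\sqrt{n-1}$, which strictly exceeds $f_n$ for every $n \geq 4$. If $d(G) = 1$ then $G \cong K_n$ contributes exactly $n/2$. Otherwise $d(G) = 2$; let $k$ be the number of universal (degree $n-1$) vertices of $G$. Since $G \not\cong K_n$ one has $k \leq n-2$, and $ecc(G) = (k + 2(n-k))/n = 2 - k/n$. If $k = 0$ then $G \not\cong S_n$, so Bollob\'as--Erd\H os is strict and $Ra(G) \cdot ecc(G) > 2\sqrt{n-1} > f_n$. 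If $k = 1$ then $Ra(G) \cdot ecc(G) \geq \sqrt{n-1}(2-1/n)$ with equality iff $G \cong S_n$; this matches $f_n$ when $n \geq 14$ and strictly exceeds $n/2 = f_n$ when $n \leq 13$.

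The main obstacle is the remaining case $d(G) = 2$ with $k \geq 2$, where $ecc(G) = 2 - k/n < 2 - 1/n$ and the Bollob\'as--Erd\H os bound alone is insufficient. The key sub-claim to establish is: \emph{among diameter-$2$ graphs with exactly $k$ universal vertices, $Ra(G)$ is minimized by the split graph $K_k \vee \overline{K_{n-k}}$}, for which
$$
Ra(K_k \vee \overline{K_{n-k}}) = \frac{k(k-1)}{2(n-1)} + (n-k)\sqrt{\frac{k}{n-1}}.
$$
Every such $G$ has the form $K_k \vee H$ for some $H$ on $n-k$ vertices; removing any edge $vw \in E(H)$ preserves connectivity and diameter $2$ (since $v, w$ remain at distance $2$ through any universal neighbor), and a direct computation of the net change $\Delta Ra$ shows it is strictly negative: the loss $1/\sqrt{d_v d_w}$ from the removed edge dominates the small gains $(1/\sqrt{d_v - 1} - 1/\sqrt{d_v})/\sqrt{d_u}$ from the decreased degrees of $v, w$ at their remaining incident edges, the point being that $d_v, d_w \geq k+1 \geq 3$ keeps these marginal gains small. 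Iterating yields $K_k \vee \overline{K_{n-k}}$ as the minimum. Granting the sub-claim, set $g(k) := Ra(K_k \vee \overline{K_{n-k}}) \cdot (2 - k/n)$ for $k \in \{1, \ldots, n\}$ (with $g(n) = n/2$, corresponding to $K_n$); a routine calculus/algebra check shows $g$ is \emph{U-shaped} on this range with minimum $\min(g(1), g(n)) = f_n$ attained only at the endpoints, which supplies the global minimum and the claimed uniqueness of the extremal graph.
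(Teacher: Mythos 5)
The paper itself does not prove this statement: Conjecture A.464-L appears only in the concluding remarks as one of the open conjectures restated for future research (checked there computationally on graphs with at most $10$ vertices), so there is no proof of the author's to compare yours against. Judged on its own, the outer layer of your argument is sound: the crossover of $n/2$ and $\sqrt{n-1}\,(2-\tfrac1n)$ between $n=13$ and $n=14$, the case $d(G)\ge 3$ via $ecc(G)\ge r(G)\ge 2$ together with Bollob\'as--Erd\H os, the identity $ecc(G)=2-k/n$ for diameter-$2$ graphs with $k$ universal vertices, and the cases $k\in\{0,1\}$ are all correct, and you have correctly isolated where the difficulty lives ($d(G)=2$, $k\ge 2$).

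However, the justification of your key sub-claim is false: deleting an edge $vw\in E(H)$ from $G=K_k\vee H$ does \emph{not} always decrease the Randi\'c index. Take $k=2$ and let $H$ be the double star with adjacent centers $v,w$, each carrying $m$ pendent vertices, so $n=2m+4$, $\deg v=\deg w=m+3$, the leaves have degree $3$ and the two universal vertices have degree $n-1$. Deleting $vw$ changes the Randi\'c index by
$$
\Delta Ra=-\frac{1}{m+3}+2\left(\frac{1}{\sqrt{m+2}}-\frac{1}{\sqrt{m+3}}\right)\left(\frac{2}{\sqrt{2m+3}}+\frac{m}{\sqrt{3}}\right),
$$
which is positive for all $m\ge 4$ (e.g.\ $m=10$, $n=24$ gives $\Delta Ra\approx +0.063$). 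The flaw in your heuristic is quantitative: the total gain over the $d_v-1$ surviving edges at $v$ is of order $1/(\sqrt{d_v}\sqrt{d_u})$, which dominates the loss $1/\sqrt{d_v d_w}$ whenever the deleted edge joins two high-degree vertices whose other neighbours have small degree. So the monotone edge-deletion argument collapses, and with it the reduction to $K_k\vee\overline{K_{n-k}}$. Note moreover that the sub-claim is essentially a restricted form of the Delorme--Favaron--Rautenbach problem on the minimum Randi\'c index of graphs with given minimum degree, a notoriously delicate question whose general form admits counterexamples; it will not follow from a local perturbation. The concluding ``U-shape'' analysis of $g(k)$ is plausible but also only asserted. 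As it stands, the case $d(G)=2$ with $k\ge 2$ remains open, and hence so does the conjecture.
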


\begin{conjecture}[A.458-L]
For every $n \geq 4$ it holds
$$
\lambda (G) + ecc (G) \geq \sqrt{n - 1} + \left(2 - \frac{1}{n} \right),
$$
with equality if and only if $G \cong S_n$, where $\lambda (G)$ is the largest eigenvalue of the
adjacency matrix of $G$.
\end{conjecture}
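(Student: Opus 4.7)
The plan is to verify equality at $S_n$ and then proceed by case analysis on $\Delta(G)$, combining subgraph monotonicity of the spectral radius with elementary eccentricity lower bounds. Since $S_n = K_{1,n-1}$ is bipartite, its Perron eigenvector of the form $(\alpha, \beta, \dots, \beta)$ (centre followed by $n-1$ leaves) satisfies $\lambda\alpha = (n-1)\beta$ and $\lambda\beta = \alpha$, yielding $\lambda(S_n) = \sqrt{n-1}$. Together with $ecc(S_n) = 2 - 1/n$ recorded in the introduction, equality is attained by $S_n$. The trivial case $G = K_n$ gives $\lambda + ecc = n > \sqrt{n-1} + 2 - 1/n$ for $n \geq 4$, so I may assume $G \notin \{S_n, K_n\}$ and aim for strict inequality.

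\emph{Case 1: $\Delta(G) = n-1$.} Then $G$ contains $S_n$ as a spanning subgraph centred at any universal vertex, so $\lambda(G) \geq \sqrt{n-1}$. A universal vertex forces $\mathrm{diam}(G) \leq 2$, so the vertices of eccentricity $1$ are exactly the universal ones; with $k$ such vertices, $ecc(G) = 2 - k/n$. If $k = 1$ and $G \supsetneq S_n$, then adding any edge to a connected graph strictly increases $\lambda$, while $ecc(G) = 2 - 1/n$ is unchanged, giving strict inequality. If $k \geq 2$, then $G \supseteq K_k \vee \overline{K_{n-k}}$, whose spectral radius (via a two-class step eigenvector) equals $\tfrac{1}{2}\bigl((k-1) + \sqrt{(k-1)^2 + 4k(n-k)}\bigr)$, and the inequality reduces to
\[
\tfrac{1}{2}\bigl((k-1) + \sqrt{(k-1)^2 + 4k(n-k)}\bigr) \geq \sqrt{n-1} + \frac{k-1}{n},
\]
an algebraic statement verifiable by squaring.

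\emph{Case 2: $\Delta(G) \leq n-2$.} There is no universal vertex, so every vertex has eccentricity at least $2$ and $ecc(G) \geq 2$. I would combine three lower bounds: (i) $\lambda(G) \geq \sqrt{\Delta(G)}$, with equality iff a component of $G$ is a star; (ii) the Moore-type inequality $n \leq 1 + \sum_{i=0}^{r-1} \Delta (\Delta - 1)^i$, which forces $ecc(G) \geq r(G) = \Omega(\log_\Delta n)$ when $\Delta$ is small; and (iii) Hofmeister's bound $\lambda(G)^2 \geq n^{-1} \sum_v \deg(v)^2$, which is saturated by $S_n$ (with $\sum_v \deg(v)^2 = n(n-1)$). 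These handle the two extreme regimes comfortably: small $\Delta$ forces a large radius, while $\Delta$ close to $n-2$ places $G$ near a near-star, where Hofmeister is nearly tight.

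The main obstacle is the intermediate regime $\Delta = \Theta(\sqrt{n})$, in which $\sqrt{\Delta}$ is appreciably less than $\sqrt{n-1}$ while $\log_\Delta n$ is only $O(1)$, so neither of the naive bounds above is tight. Closing that gap will likely require a Rayleigh-quotient estimate exploiting the length-$2$ paths forced between non-adjacent vertices in a small-diameter graph, or a Nikiforov-type inequality relating $\lambda$ jointly to $\Delta$ and to the number of edges. The equality analysis should then pin $S_n$ down as the unique minimiser by forcing the Perron eigenvector to be constant on leaves and the degree sequence to be $(n-1, 1, \dots, 1)$.
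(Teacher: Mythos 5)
The statement you are trying to prove is not proved in the paper at all: it appears in the concluding remarks as one of several open AutoGraphiX conjectures that the author merely restates for future research, having verified it only computationally on all graphs with at most $10$ vertices and trees with at most $20$ vertices. So there is no proof in the paper to compare yours against, and the question is simply whether your argument is complete. It is not, and you say so yourself: Case 2 ($\Delta(G) \le n-2$) is left open in the ``intermediate regime,'' and that regime is precisely where the entire difficulty of the conjecture lives. Your Case 1 is essentially sound (the computation $\lambda(S_n)=\sqrt{n-1}$, the observation that $k$ universal vertices give $ecc(G)=2-k/n$, and the reduction to the spectral radius of $K_k \vee \overline{K_{n-k}}$ are all correct, though the final algebraic inequality is asserted rather than checked), but Case 1 covers only graphs with a universal vertex.

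To see why the three bounds you list cannot close Case 2, note that for any $d$-regular graph of diameter $2$ one has $\lambda(G)=d$ and $ecc(G)=2$, while the Moore bound gives $n \le d^2+1$, i.e.\ $d \ge \sqrt{n-1}$; the conjectured lower bound $\sqrt{n-1}+2-\frac{1}{n}$ is then met with slack only $\frac{1}{n}$ (the Petersen graph gives $\lambda+ecc=5$ against a bound of $4.9$, and a hypothetical Moore graph of degree $57$ gives $59$ against $59-\frac{1}{3250}$). So in the diameter-$2$ regime the inequality is essentially as tight as it is at $S_n$, and any successful argument must recover the Moore bound exactly; the estimates $\lambda \ge \sqrt{\Delta}$ and $ecc \ge \Omega(\log_\Delta n)$ lose far too much there, and Hofmeister's bound requires $\Theta(n^2)$ edges to be useful. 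Your own assessment that a substantially sharper Rayleigh-quotient or Nikiforov-type inequality would be needed is accurate, but until that step is supplied the argument is a programme, not a proof --- which is consistent with the paper leaving the statement as an open conjecture.
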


\begin{conjecture}[A.460-L]
For every $n \geq 4$ it holds
$$
\lambda (G) \cdot ecc (G) \geq \sqrt{n - 1} \cdot \left ( 2 - \frac{1}{n} \right),
$$
with equality if and only if $G \cong S_n$.
\end{conjecture}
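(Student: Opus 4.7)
The plan is to proceed by case analysis on whether $G$ contains a universal vertex (equivalently, whether $\Delta(G) = n-1$), using the classical spectral bound $\lambda(G) \geq \sqrt{\Delta(G)}$ together with Hofmeister's estimate $\lambda(G)^2 \geq \frac{1}{n}\sum_{v \in V}\deg(v)^2$, which comes from the Rayleigh quotient $\mathbf{1}^T A^2 \mathbf{1}/n \leq \lambda(A^2)$.

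Suppose first that $G$ has a universal vertex, and let $k \geq 1$ be the number of such vertices. These $k$ vertices are pairwise adjacent, each has eccentricity $1$, and every other vertex has eccentricity exactly $2$ (since it has at least one non-neighbor), so $ecc(G) = 2 - k/n$. The presence of $K_k$ as a subgraph gives $\lambda(G) \geq k - 1$ via Rayleigh, while the max-degree bound gives $\lambda(G) \geq \sqrt{n-1}$ with equality iff $G \cong S_n$. For $k = 1$ this immediately yields $\lambda(G) \cdot ecc(G) \geq \sqrt{n-1}(2 - 1/n)$, with equality iff $G \cong S_n$. For $k \geq 2$ one verifies $(k-1)(2 - k/n) \geq \sqrt{n-1}(2 - 1/n)$; the tightest instance is $k = n$ (the clique $K_n$), which reduces to $\sqrt{n-1} \geq 2 - 1/n$ and holds for $n \geq 5$.

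Now suppose $G$ has no universal vertex, so every vertex has eccentricity at least $2$ and $ecc(G) \geq 2$. If $\mathrm{diam}(G) = 2$, the entrywise matrix inequality $A^2 + A + I \geq J$ (every pair of vertices is joined by a walk of length at most $2$) yields, upon summing off-diagonal entries, $\sum_v \deg(v)^2 \geq n(n-1)$. Combining with Hofmeister's bound gives $\lambda(G) \geq \sqrt{n-1}$, so $\lambda(G) \cdot ecc(G) \geq 2\sqrt{n-1} > \sqrt{n-1}(2 - 1/n)$ strictly. If $\mathrm{diam}(G) \geq 3$, the endpoints of a diametral path have eccentricity at least $3$, so $ecc(G) \geq 2 + 2/n$; in the tree-like regime $\lambda$ is near $2$ but $ecc(G)$ grows linearly in $n$ and the product vastly overshoots the target, while in regimes with larger $\lambda$ one pieces together the Rayleigh bound with Moore-type constraints linking $\Delta$ and the diameter.

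The principal obstacle will be giving a uniform treatment of the $\mathrm{diam}(G) \geq 3$ subcase: the two extremes (very small $\lambda$ with very large $ecc$, and moderate-to-large $\lambda$ handled by Hofmeister) are under control, but the intermediate regime—graphs of diameter $3$ or $4$ with moderate maximum degree—requires a sharper inequality, ideally a diameter-$d$ analogue $A^d + \cdots + A + I \geq J$ combined with a Hofmeister-type spectral bound on $\sum_v \deg(v)^2$. Achieving exact sharpness at $G = S_n$ would additionally call for a perturbation argument showing that any edge added to the star strictly increases the product, which is plausible since $\lambda$ increases strictly under edge addition while $ecc$ can only decrease.
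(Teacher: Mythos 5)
First, note that the paper contains no proof of this statement: Conjecture A.460-L is one of the open conjectures merely \emph{restated} in the concluding section for future research, verified only computationally on graphs with at most $10$ vertices. So your proposal has to stand entirely on its own. Two pieces of it do stand: the case of exactly one universal vertex (where $ecc(G)=2-\frac{1}{n}$ and $\lambda(G)\geq\sqrt{\Delta}=\sqrt{n-1}$ with equality, for connected $G$, only for $S_n$), and the case of diameter two with no universal vertex (where $A^2+A+I\geq J$ entrywise gives $\sum_{v}deg(v)^2\geq n(n-1)$, hence $\lambda(G)\geq\sqrt{n-1}$ by Hofmeister, while $ecc(G)\geq 2$). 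Both of those arguments are correct and clean.

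The rest has concrete failures. (i) For $k\geq 2$ universal vertices the inequality you propose to ``verify,'' namely $(k-1)\left(2-\frac{k}{n}\right)\geq\sqrt{n-1}\left(2-\frac{1}{n}\right)$, is false whenever $k$ is small relative to $\sqrt{n}$: already for $k=2$ the left side is $2-\frac{2}{n}<2$ while the right side is roughly $2\sqrt{n-1}$. The tight end is small $k$, not $k=n$, and there neither $\lambda\geq k-1$ nor $\lambda\geq\sqrt{n-1}$ suffices, because $ecc(G)=2-\frac{k}{n}$ already falls below $2-\frac{1}{n}$; one would need something like Hofmeister applied to the whole degree sequence ($k$ vertices of degree $n-1$ and $n-k$ vertices of degree at least $k$) to recover the deficit. (ii) Your observation that the $k=n$ case ``holds for $n\geq 5$'' conceals a refutation: for $n=4$ the graph $K_4$ gives $\lambda\cdot ecc=3<\sqrt{3}\cdot\frac{7}{4}$, so the conjecture as stated is already false at $n=4$ (the same small-$n$ phenomenon the paper records for $Ra\cdot ecc$ in Conjecture A.464-L, where $K_n$ beats $S_n$). (iii) Most seriously, the diameter $\geq 3$ regime that you explicitly leave open is not a technical loose end --- it is where the statement actually breaks. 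The hypercube $Q_d$ is $d$-regular with every eccentricity equal to $d$ (the paper itself lists $ecc(Q_n)=n$), so $\lambda\cdot ecc=d^2$, whereas the conjectured lower bound grows like $2\cdot 2^{d/2}$; for $d=13$, i.e.\ $n=8192$, one gets $\lambda\cdot ecc=169$ while $\sqrt{8191}\left(2-\frac{1}{8192}\right)>180$. No Moore-type or Hofmeister-type refinement can close a case that contains counterexamples, so the program cannot be completed as designed; at best it could yield a corrected version of the conjecture with additional hypotheses.
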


\begin{conjecture} [A.479-U]
For every $n \geq 4$ the maximum value of $ecc (G) / \alpha (G)$ is achieved for some graph $G$
composed of two cliques linked by a path.
\end{conjecture}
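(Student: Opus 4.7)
The plan is to take an extremal graph $G^{*}$ maximizing $ecc(G)/\alpha(G)$ among connected graphs on $n$ vertices and, by a sequence of ratio-nondecreasing modifications, bring it to the form ``two cliques joined by a path.'' The intuition is that large $ecc$ pulls toward a long diametrical spine, while small $\alpha$ pulls toward clique-like density; the optimal compromise concentrates all dense structure at the two endpoints of the spine, leaving a pure path in between.

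I would proceed in three stages. Stage~1 (spine straightening): fix a diametrical pair $\{x,y\}$ with $d = d(x,y) = \mathrm{diam}(G^{*})$ and a shortest $x$--$y$ path $P = v_0 v_1 \ldots v_d$. Using a combination of the $\sigma$-transformation and Theorem~\ref{thm-pi}, show that any subgraph hanging off the interior of $P$ (attached at some $v_i$ with $0 < i < d$) can be pushed toward an endpoint while $ecc$ weakly increases and $\alpha$ weakly decreases; the migration is an adaptation of the inverse of the $\sigma$-transformation, attaching branch vertices as neighbors of $v_0$ or $v_d$ and absorbing them into a growing terminal clique. Stage~2 (cliquification of ends): once all extra vertices are attached at the ends, argue that the two terminal neighborhoods must themselves be cliques; any missing edge within the neighborhood of $v_0$ or $v_d$, when added, leaves eccentricities unchanged (the shortest paths realizing the diameter are unaffected, since they pass through the spine) while strictly decreasing $\alpha$. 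Stage~3 (parameter optimization): the resulting graph has the form of a $K_a$-clique joined to a $K_b$-clique by a path of $k$ vertices, with $a + b + k = n + 2$; closed-form expressions, such as $\alpha = 2 + \lceil (k-2)/2 \rceil$ together with a layered summation for $ecc$, can then be maximized over $(a,b,k)$ to pin down the extremal graph up to isomorphism.

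The hard part will be Stage~1: migrating a vertex $w$ from a branch at $v_i$ to the endpoint cluster is a multi-edge modification, and one must verify case by case that the resulting distance changes do not cause a net drop in $ecc$ or a net rise in $\alpha$. Subcases include whether $w$'s eccentricity realized the diameter, whether $w$ had its own subtree, and whether pulling it out affects the diametrical structure of the remaining graph. A secondary obstacle is ruling out competing shapes---such as two cliques sharing a single vertex with a long pendant path attached, or three cliques linked by two paths---which should be handled by direct exchange arguments showing the ratio strictly improves when the dense regions are split into exactly two disjoint cliques placed at opposite ends of the spine.
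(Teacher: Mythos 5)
First, be aware that the paper does not prove this statement. Conjecture A.479-U appears only in the concluding remarks, among the conjectures the author restates for future research and has merely verified computationally on all graphs with at most $10$ vertices. There is therefore no proof in the paper to compare yours against; what you are proposing would be a new result, and it has to stand entirely on its own rigor.

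On that standard there is a genuine gap, and it sits exactly where you place it: Stage~1. The structural obstacle is that $ecc(G)/\alpha(G)$ is a ratio of two quantities that respond to every elementary modification in the \emph{same} direction: deleting an edge (while preserving connectivity) weakly increases both $ecc$ and $\alpha$, and adding an edge weakly decreases both. Hence, unlike the paper's treatment of $ecc\cdot\omega$ --- where one first deletes edges to increase $ecc$ without harming $\omega$ and then invokes Theorem~\ref{thm-pi} --- no transformation here is \emph{a priori} monotone for the ratio; every step demands a quantitative comparison of the change in the numerator against the change in the denominator. Your central claim, that a branch hanging at an interior vertex $v_i$ can be migrated into a terminal clique so that ``$ecc$ weakly increases and $\alpha$ weakly decreases,'' is asserted rather than proved, and it is not enough even where true: if the migration leaves $\alpha$ unchanged the ratio improves however small the gain in $ecc$, but if absorbing the branch frees up a path vertex for a maximum independent set (a parity effect entirely analogous to the $n$ odd/even split in Conjecture~\ref{con-1}), $\alpha$ can fail to drop while $ecc$ gains only $O(1/n)$, and in other configurations the net effect on the ratio is unclear. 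Stage~2 and the exclusion of competing shapes (lollipops, cliques sharing a cut vertex, three dense clusters) are likewise left to ``direct exchange arguments'' that are never carried out, and Stage~3 is where the real content of the conjecture lives --- the quantitative trade-off between clique sizes and path length --- yet it is only sketched. As written this is a plausible programme, not a proof, and the conjecture should still be regarded as open.
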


\begin{conjecture} [A.492-U]
For every $n \geq 4$ the maximum value of $ecc (G) \cdot \chi (G)$ is achieved for some lollipop
graph, where $\chi (G)$ denotes the chromatic number of $G$.
\end{conjecture}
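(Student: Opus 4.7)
My plan is to reduce this conjecture to the analogous result for $\omega$ proved in Section~4. Since $\omega (G) \leq \chi (G)$ always, and the lollipop satisfies $\omega (LP (n,k)) = \chi (LP (n,k)) = k$, the conjectured maximum value of $ecc \cdot \chi$ numerically coincides with the maximum $\max_{k'} ecc (LP (n,k')) \cdot k'$ already obtained for $ecc \cdot \omega$. Since $\max_G ecc (G) \cdot \chi (G) \geq \max_G ecc (G) \cdot \omega (G)$ is trivial, it suffices to establish the reverse bound
$$
ecc (G) \leq ecc (LP (n,k)) \qquad \mbox{whenever } \chi (G) = k,
$$
because then $ecc (G) \cdot \chi (G) = ecc (G) \cdot k \leq ecc (LP (n,k)) \cdot k$, which is bounded by the lollipop maximum.

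To prove this key inequality, I would mimic the argument used for Conjecture~A.488-U, with a chromatic-critical subgraph in place of a maximum clique. First, fix a minimum $k$-vertex-critical subgraph $H$ of $G$ of order $m \geq k$. Since edge deletion never decreases the average eccentricity, I would reduce $G$ to a minimal connected spanning subgraph $G^*$ with $\chi (G^*) = k$ by removing any edge whose deletion preserves both connectivity and the chromatic number; each such step only strengthens the target bound. The resulting graph decomposes as a $k$-critical core $H^*$ together with tree-like appendages. Applying Theorem~\ref{thm-pi} iteratively to the appendages, and then the length-shifting transformation $G (p,q) \mapsto G (p+1,q-1)$ across distinct appendages, I would concentrate all non-core vertices into a single pendent path attached to the core.

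The final and most delicate step is to replace the core $H^*$ by $K_k$ and redistribute the freed $m - k$ vertices along the pendent path, arriving at $LP (n,k)$. This substitution must be shown not to decrease the average eccentricity, and it is the main obstacle of the plan. The difficulty is that $k$-chromatic-critical graphs can be structurally complex (odd cycles for $k = 3$, Mycielski graphs, Kneser-type constructions for larger $k$) and eccentricity is a global invariant, so the replacement alters distances throughout $G$. Intuitively, $K_k$ is the unique $k$-critical graph of minimum order and has diameter $1$, so any larger critical core $H^*$ wastes vertices that could instead lengthen the pendent path; moving those vertices to the tail should only increase each vertex's eccentricity, since the gained path length dominates the internal diameter of $H^*$. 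Making this rigorous will likely require either a vertex-by-vertex comparison of distances before and after the replacement, or a local reduction (in the spirit of the $\sigma$-transformation from Section~2) that shrinks the critical core one vertex at a time while preserving $\chi \geq k$ and not decreasing $ecc$.
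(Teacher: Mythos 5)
First, a point of orientation: the paper does not prove Conjecture A.492-U. It appears in Section~6 among the conjectures restated for future research and is only reported as verified computationally on graphs with at most $10$ vertices and trees with at most $20$ vertices, so there is no proof in the paper to compare yours against. Your reduction skeleton is sensible: since $\omega(LP(n,k))=\chi(LP(n,k))=k$, the problem for $\chi$ would indeed follow from the single inequality $ecc(G)\leq ecc(LP(n,k))$ for all connected $n$-vertex graphs with $\chi(G)=k$, combined with the optimization over $k$ already carried out for the clique-number conjecture. The preliminary reductions (deleting non-bridge, non-$\chi$-critical edges, which cannot decrease $ecc$; then straightening the tree appendages via Theorem~\ref{thm-pi} and the path-shifting transformation) are in the spirit of the paper's treatment of A.488-U and are plausible.

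However, the argument has a genuine gap exactly where you flag it, and that gap is the whole content of the problem: replacing the $k$-critical core $H^*$ of order $m$ by $K_k$ and appending the freed $m-k$ vertices to the pendent path, while showing $ecc$ does not decrease. This is not a routine adaptation of the clique-number argument, because there the core $K_k$ is given by hypothesis and never modified; here the core ranges over all $k$-critical graphs, which can themselves be long and thin (odd cycles for $k=3$, Haj\'os-type constructions for $k\geq 4$), so the core itself contributes substantially to eccentricities and the replacement changes distances for every vertex of the graph. A crude diameter count does not settle it either: a $k$-critical core on $m$ vertices can have diameter on the order of $m/2$, while the replacement buys only $m-k+1$ of additional path length, so for $m$ not much larger than $k$ one needs finer information (e.g., that critical graphs of order close to $k$ are dense and have diameter $2$), and in any case $ecc$ depends on all vertices, not just the two diametrical ones, and on where the pendent path is attached to the core. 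Until you supply either the vertex-by-vertex comparison or a one-vertex-at-a-time shrinking of the core that provably preserves $\chi\geq k$ and does not decrease $ecc$, what you have is a plan rather than a proof, and the conjecture remains open, as it does in the paper.
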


A dominating set of a graph $G$ is a subset $D$ of $V$ such that every vertex not in $D$ is joined
to at least one member of $D$ by some edge. The domination number $\gamma (G)$ is the number of
vertices in a smallest dominating set for $G$ \cite{HaHeSl98}.

\begin{conjecture}[A.464-L]
For every $n \geq 4$ it holds
$$
\gamma (G) + ecc (G) \geq \left\{
\begin{array}{l l}
  \lfloor \frac{n + 1}{3} \rfloor + \frac{(3n+1)n}{4(n-1)} & \quad \mbox{if $n$ is odd and $n \not \equiv 1 \pmod 3$ }\\
  \lfloor \frac{n + 1}{3} \rfloor + \frac{3n-2}{4} & \quad \mbox{if $n$ is even and $n \not \equiv 1 \pmod 3$ }\\
  \frac{13n-16}{12} - \frac{3}{4n} & \quad \mbox{if $n$ is odd and $n \equiv 1 \pmod 3$ }\\
  \frac{13n-16}{12} - \frac{1}{n} & \quad \mbox{if $n$ is even and $n \equiv 1 \pmod 3$ }
\end{array} \right.,
$$
with equality if and only if $G \cong P_n$ for $n \not \equiv 1 \pmod 3$ or $G$ is a tree with $D =
n - 2$ and $\gamma = \lfloor \frac{n+1}{3} \rfloor$ for $n \equiv 1 \pmod 3$.
\end{conjecture}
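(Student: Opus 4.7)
The conjecture has the same shape as Conjecture~\ref{con-1} for the independence number, so I would run the same three-step program, trading $\alpha$ for $\gamma$ throughout.

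\textit{Step 1 (reduction to trees).} Both $ecc$ and $\gamma$ fail to increase when a chord is added to a connected graph: the first because distances shrink, the second because any dominating set of $G$ remains one in $G + uv$. Hence, moving to a spanning tree of $G$ does not worsen $\gamma + ecc$, and the relevant extremal $G$ may be taken to be a tree $T^{\star}$.

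\textit{Step 2 (reduction to a double broom).} Fix a diametrical path $P = v_{0} v_{1} \ldots v_{d}$ of $T^{\star}$. Using Lemma~\ref{lem:lepa}, repeatedly locate a pendent vertex $v \notin P$ whose deletion preserves the eccentricities of all other vertices, delete it, and reattach it to $v_{1}$ or $v_{d-1}$; the diameter stays equal to $d$. Arguing as in the proof of Conjecture~\ref{con-1}, one then checks that this reattachment does not alter $\gamma + ecc$. Iterating exhausts all branches and produces a double broom $D(d, a, b)$ with $a + b + d + 1 = n$, which by the Remark following Conjecture~\ref{con-1} has the same average eccentricity as a suitable broom.

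\textit{Step 3 (closed form on $D(d, a, b)$).} Compute $ecc(D(d, a, b))$ and $\gamma(D(d, a, b))$ directly. The eccentricities of $D(d, a, b)$ depend essentially only on $d$ (the $a + b$ leaves all have eccentricity $d$, and the interior path contributes the usual path eccentricities), yielding an average eccentricity that is a simple rational function of $n$ and $d$. A minimum dominating set takes the two support vertices at the ends of the path together with an optimal $\lceil (d-3)/3 \rceil$-dominating set of the interior, giving $\gamma(D(d, a, b)) = 2 + \lceil (d - 3)/3 \rceil$ for $d \geq 4$.

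\textit{Step 4 (optimization).} Substituting yields an explicit expression in $d$ alone, which one extremizes over $2 \leq d \leq n - 1$. The floor in the formula for $ecc(P_{d+1})$ creates the parity split on $n$, while the ceiling in the domination count creates the split on $n \bmod 3$; together they force the four subcases, with the $n \equiv 1 \pmod 3$ branch attaining its extremum at $d = n - 2$ (and hence at a broomlike tree with $D = n-2$ and $\gamma = \lfloor (n+1)/3 \rfloor$) rather than at $d = n - 1$ (the path $P_{n}$).

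\textit{Main obstacle.} Step~2 is the delicate point. Unlike $\alpha$, the domination number $\gamma$ can drop by one when a pendent is moved from an interior branch to a near-diametrical position, so the Lemma~\ref{lem:lepa} reattachment does not manifestly preserve $\gamma + ecc$. Circumventing this almost certainly requires a strengthened lemma showing that among trees of fixed order and diameter one can always choose a minimum dominating set with a prescribed ``broomlike'' structure along the diametrical path, after which the reattachment trick applies cleanly. Producing this refined lemma---and then handling the boundary cases of small $d$ where the formula $\gamma(D(d,a,b)) = 2 + \lceil (d-3)/3 \rceil$ breaks down---is, in my view, the principal technical difficulty.
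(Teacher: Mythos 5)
There is a basic mismatch between your goal and the paper's treatment of this statement: the paper does not prove Conjecture A.464-L on the domination number. It reports that computational testing shows the conjecture to be \emph{incorrect as stated}, and replaces it with a corrected version in which the exceptional congruence class is $n \equiv 0 \pmod 3$ (not $n \equiv 1 \pmod 3$) and the exceptional extremal tree is $D_n \cong S(n-4,2,1)$, a path $P_{n-1}$ with a pendant vertex attached at $v_3$. Even that corrected version is left open; the paper only observes that, since $\gamma(P_n)=\lceil n/3\rceil$ and the path maximizes the average eccentricity, a proof would still have to dispose of trees with $\lceil n/3\rceil < \gamma \leq \lfloor n/2\rfloor$. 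So your proposal is aimed at a false target: no repair of Steps 2--4 can establish the claimed case split and equality characterization, because the predicted extremal graphs are simply not the extremal ones.

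To your credit, your ``Main obstacle'' paragraph puts its finger on exactly the right spot: the pendant-reattachment of Lemma \ref{lem:lepa} preserves $\alpha + ecc$ but not $\gamma + ecc$, since moving a leaf to a near-diametrical position can change the domination number. That is precisely where the analogy with Conjecture \ref{con-1} breaks down, and it is why the original conjecture misidentifies both the exceptional residue class and the exceptional trees. (A smaller quibble: your closed form $\gamma(D(d,a,b)) = 2 + \lceil (d-3)/3\rceil$ is already off for the degenerate case of the path, e.g.\ it gives $3$ for $P_5$ instead of $2$, so even Step 3 would need repair --- but this is moot given the larger issue.) The productive move here is not to force the reduction through but to do what the paper did: test small cases, discover that the stated bound and equality cases fail at $n \equiv 0 \pmod 3$, and restate the conjecture accordingly, leaving its proof open.
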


We tested this conjecture and derived the following corrected version

\begin{conjecture}[A.464-L]
For every $n \geq 4$ it holds
$$
\gamma (G) + ecc (G) \geq \left\{
\begin{array}{l l}
  \lceil \frac{n}{3} \rceil + \frac{1}{n} \left \lfloor \frac{3}{4}n^2 - \frac{1}{2}n \right \rfloor & \quad \mbox{if $n \not \equiv 0 \pmod 3$ }\\
  \frac{n}{3} + 2 - \frac{3}{n} + \frac{1}{n} \left \lfloor \frac{3}{4}(n-1)^2 - \frac{1}{2}(n-1) \right \rfloor & \quad \mbox{if $n \equiv 0 \pmod 3$ }\\
\end{array} \right.,
$$
with equality if and only if $G \cong P_n$ for $n \not \equiv 0 \pmod 3$ or $G \cong D_n$ for $n
\equiv 0 \pmod 3$, where $D_n \cong S (n - 4, 2, 1)$ is a tree obtained from a path $P_{n-1} = v_1
v_2 \ldots v_{n-1}$ by attaching a pendent vertex to $v_3$.
\end{conjecture}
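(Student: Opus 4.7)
First, I would verify that the right-hand side is exactly $\gamma(P_n)+ecc(P_n)$ when $n\not\equiv 0\pmod 3$ and $\gamma(D_n)+ecc(D_n)$ when $n\equiv 0\pmod 3$. The identity $\gamma(P_n)=\lceil n/3\rceil$ is classical, $ecc(P_n)$ appears in the introduction, and a direct enumeration of vertex eccentricities on $D_n\cong S(n-4,2,1)$ matches the second branch of the formula. Thus the conjecture asserts that $P_n$, or $D_n$ in the exceptional residue class, uniquely minimizes $\gamma+ecc$, and the task reduces to certifying that status.

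The core of the plan is an extremal argument on trees. Starting from a non-path tree $T$ on $n$ vertices, I would apply the reverse of the transformation in Theorem~\ref{thm-pi}: at a branching vertex carrying two pendent paths of lengths $p\geq q\geq 1$, replacing them by pendent paths of lengths $p-1$ and $q+1$ strictly decreases $ecc$ (by Cases~1--3 of that proof, read right to left). I would simultaneously track the change in $\gamma$ by using $\gamma(P_a\cup P_b)=\lceil a/3\rceil+\lceil b/3\rceil$ along the detached pendent paths, and carry out a residue-mod-$3$ bookkeeping. This arithmetic is what produces the dichotomy in the statement: when $n\not\equiv 0\pmod 3$, every balancing move that decreases $ecc$ pays a strictly larger price in $\gamma$, so the minimum is forced onto $P_n$; when $n\equiv 0\pmod 3$, there is exactly one ``free'' move — splitting off a length-$3$ pendent branch — after which we land in $D_n=S(n-4,2,1)$ and no further move improves the sum. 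The $\sigma$-transform of Section~2 would be used in parallel to collapse internal branching before pendent-path redistribution kicks in.

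The main obstacle, and the step I expect to dominate the proof, is the reduction from an arbitrary connected graph to a tree. The usual spanning-tree reduction is unavailable here because edge-addition can only decrease both $ecc$ and $\gamma$, so it moves $\gamma+ecc$ in the wrong direction for a lower bound; indeed $K_n$ gives $\gamma+ecc=2$, well below the claimed bound for large $n$, which signals that any successful proof must exploit extra structure. The approach I would try is a diameter-based case split: the classical bound $\gamma(G)\geq\lceil (d(G)+1)/3\rceil$ combined with an upper estimate of the form $ecc(G)\leq\tfrac{3n}{\delta+1}+O(1)$ (in the spirit of Dankelmann--Goddard--Swart~\cite{DaGoSw04}) should force $\gamma(G)+ecc(G)$ to be large unless $G$ is close to a long-diameter tree, at which point the tree argument above takes over. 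Pinning down the intermediate regime — moderate diameter and moderate minimum degree — is where the hard analytic work lies and where the conjecture is likely to require either a class restriction (e.g.\ to trees, matching the authors' computational range) or a reformulation that incorporates a density correction.
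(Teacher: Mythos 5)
There is an important mismatch of expectations here: the paper contains \emph{no proof} of this statement. It appears in the concluding section as a \emph{corrected conjecture}, checked only computationally (all graphs on at most $10$ vertices, trees on at most $20$), and explicitly left for future research; the only guidance offered is the one-sentence remark that the extremal graphs are trees and that one must examine trees with $\lceil n/3\rceil < \gamma \leq \lfloor n/2\rfloor$. So there is no proof of the paper to compare yours against, and your proposal can only be judged on its own merits.

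Judged that way, the central problem is that you take the inequality sign at face value and then devote most of your effort to rescuing a statement that is false as literally written. You correctly observe that $K_n$ gives $\gamma+ecc=2$, far below the bound; but the right conclusion is not that ``the reduction to trees is the hard step'' --- it is that the ``$\geq$'' is evidently a typo for ``$\leq$''. The claimed extremal graphs $P_n$ and $D_n$ essentially \emph{maximize} $ecc$; the paper's own comparison is to the independence-number conjecture of Section~3, which is a maximization; and even on its own terms the $n\equiv 0\pmod 3$ branch is violated by $P_n$ itself (for $n=6$ one has $\gamma(P_6)+ecc(P_6)=2+4=6$, while the stated right-hand side equals $\gamma(D_6)+ecc(D_6)=3+19/6=37/6>6$). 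Read as an upper bound, your ``main obstacle'' evaporates: for any spanning tree $T$ of $G$ one has $\gamma(G)\le\gamma(T)$ and $ecc(G)\le ecc(T)$, so the maximum of $\gamma+ecc$ is attained on trees, exactly as the paper asserts --- no Dankelmann--Goddard--Swart machinery or diameter case split is needed. The genuinely hard step is then the one you gloss over: the claim that ``every balancing move that decreases $ecc$ pays a strictly larger price in $\gamma$'' is precisely the content of the conjecture and does not follow from residue-mod-$3$ bookkeeping on pendent paths, because the domination number of a tree is not determined by its pendent-path lengths and does not transform in a controlled local way under the moves $G(p,q)\mapsto G(p\pm 1,q\mp 1)$ (unlike the eccentricities in Theorem~\ref{thm-pi}, or the independence number in Section~3, where a maximum independent set can always be taken to contain the reattached leaf). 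Until that comparison of the change in $\gamma$ against the change in $ecc$ is carried out uniformly over all trees with $\gamma>\lceil n/3\rceil$, the proposal is a plausible programme aimed at the wrong inequality, not a proof.
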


Similarly as for the independence number, the extremal graphs are trees. The domination number of a
path $P_n$ is $\lceil \frac{n}{3} \rceil$, and since the path has maximum average eccentricity in
order to prove the conjecture one has to consider trees with $\lceil \frac{n}{3} \rceil < \gamma
\leq \lfloor \frac{n}{2} \rfloor$.\medskip

It would be also interesting to determine extremal regular (cubic) graphs with respect to the
average eccentricity, or to study some other derivative indices (such as eccentric distance sum
\cite{YuFeIl10}, or augmented and super augmented eccentric connectivity indices~\cite{DuGuMa08}).

\bigskip {\bf Acknowledgement. } This work was supported by Research
Grant 144007 of Serbian Ministry of Science and Technological Development.

\end{document}